\documentclass[11pt]{article}
\usepackage{amsfonts}
\usepackage{amsmath,amssymb}
\usepackage[dvips]{graphicx}
\usepackage{algorithmicx}
\usepackage{algpseudocode}
\usepackage{yhmath}
\usepackage{qtree}
\usepackage{xcolor}
\usepackage{epsfig}
\usepackage{lineno}
\usepackage{mathtools}
\usepackage{amsmath,bm}
\usepackage{amsthm}
\usepackage[T1]{fontenc}
\usepackage[utf8]{inputenc}
\usepackage{enumitem}
\usepackage{multirow}
\usepackage{makecell}
\usepackage{algorithm}

\usepackage{authblk}
\usepackage{bigints}

\usepackage[toc]{appendix}
\numberwithin{equation}{section}

\usepackage{subfigure}
\usepackage{verbatim}
\usepackage{graphicx}
\usepackage{geometry}
\usepackage{epsfig}
\usepackage{lineno,hyperref}
\usepackage[many]{tcolorbox}
\usetikzlibrary{shadows}
\usepackage{caption}
\captionsetup{width=\textwidth}

\usepackage{hyperref}

\newtcolorbox{shadedbox}{
  breakable,
  enhanced jigsaw,
  colback=white,
}

\newtheorem{theorem}{Theorem}[section]
\newtheorem{lemma}{Lemma}[section]
\newtheorem{remark}{Remark}[section]
\newtheorem{proposition}{Proposition}[section]
\newtheorem{definition}{Definition}[section]

\makeatletter
\def\BState{\State\hskip-\ALG@thistlm}
\makeatother

\newcommand{\lc}{\mathrel{\raise2pt\hbox{${\mathop<\limits_{\raise1pt\hbox
{\mbox{$\sim$}}}}$}}}
\newcommand{\gc}{\mathrel{\raise2pt\hbox{${\mathop>\limits_{\raise1pt\hbox
{\mbox{$\sim$}}}}$}}}
\newcommand{\ec}{\mathrel{\raise1pt\hbox{${\mathop=\limits_{\raise2pt\hbox
{\mbox{$\sim$}}}}$}}}

\renewcommand{\theequation}{\arabic{section}.\arabic{equation}}

\newcommand{\red}{\color{red}}

\usepackage{fancyhdr}
\pagestyle{fancy}
\fancyhead[L]{\footnotesize X. Liu, Y. Maday, C. Quan, and H. Zhang}
\fancyhead[R]{\footnotesize Convergence of interior-exterior domain decomposition method}

\sloppy

\begin{document}

\title{Convergence analysis of a solver for the linear Poisson--Boltzmann model}
\author[1]{Xuanyu Liu}
\author[2]{Yvon Maday}
\author[3]{Chaoyu Quan}
\author[4]{Hui Zhang}

\date{}
\affil[1]{\small School of Mathematical Sciences, Beijing Normal University,
Beijing, China
(\href{mailto:xyliu9535@mail.bnu.edu.cn}{xyliu9535@mail.bnu.edu.cn}).}
\affil[2]{\small  Sorbonne Universit\'e, CNRS, Universit\'e de Paris,
Laboratoire Jacques-Louis Lions (LJLL), F-75005 Paris, France
(\href{yvon.maday@sorbonne-universite.fr}{yvon.maday@sorbonne-universite.fr})}
\affil[3]{\small Division of Mathematics, School of Science and Engineering, The Chinese University of Hong Kong, Shenzhen, 518172, Guangdong, China
(\href{quanchaoyu@cuhk.edu.cn}{quanchaoyu@cuhk.edu.cn}).}
\affil[4]{\small Department of Mathematical Sciences, Xi'an Jiaotong-Liverpool
University, Suzhou, Jiangsu, China
(\href{mailto:Hui.Zhang@xjtlu.edu.cn}{hui.zhang@xjtlu.edu.cn}).}

\maketitle

\begin{abstract}
This work investigates the convergence of a domain decomposition method for the Poisson-Boltzmann model that can be formulated as an interior-exterior transmission problem. To study its convergence, we introduce an interior-exterior constant providing an upper bound of the $L^2$ norm of any harmonic function in the interior, and establish a spectral equivalence for related Dirichlet-to-Neumann operators to estimate the spectrum of interior-exterior iteration operator. This analysis is nontrivial due to the unboundedness of the exterior subdomain, which distinguishes it from the classical analysis of the Schwarz alternating method with nonoverlapping bounded subdomains. It is proved that for the linear Poisson--Boltzmann solvent model in reality, the convergence of interior-exterior iteration is ensured when the relaxation parameter lies between $0$ and $2$. This convergence result interprets the good performance of ddLPB method developed in \cite{quan2019domain} where the relaxation parameter is set to $1$. Numerical simulations are conducted to verify our convergence analysis and to investigate the optimal relaxation parameter for the interior-exterior iteration.
\end{abstract}

{\bf Keywords:} interior-exterior transmission problem, nonoverlapping domain decomposition method, convergence analysis, Poisson--Boltzmann equation

\section{Introduction}

In this work, we study an interior-exterior domain decomposition method for a widely-used solvent model, the Poisson--Boltzmann (PB) solvent model \cite{yoon1990boundary,nicholls1991rapid,tomasi-2005}. Specifically, we consider the linear Poisson--Boltzmann (LPB) equation, which takes the form
\begin{equation}\label{eq:lpb}
\begin{array}{r@{}l}
\left\{
\begin{aligned}
 & - \Delta \psi(\mathbf x)   = 4\pi \varepsilon_1^{-1}\rho_{\rm m}(\mathbf x) &&
\mbox{in $\Omega$},\\
& - \Delta \psi(\mathbf x) + \kappa^2 \psi(\mathbf x)  = 0 && \mbox{in
$\Omega^{\mathsf c} \coloneqq \mathbb R^3\backslash \Omega$},
\end{aligned}
\right.
\end{array}
\end{equation}
where $\psi$ is the electrostatic potential, $\rho_{\rm m}$ is the solute's charge distribution within the solute cavity $\Omega$, $\kappa>0$ is the modified Debye--H\"uckel constant, and $\varepsilon_1$ is the dielectric constant of solute.
The LPB equation also includes two transmission conditions at the solute-solvent interface $\Gamma$:
\begin{equation}\label{eq:coupling0}
\begin{array}{r@{}l}
\left\{
\begin{aligned}
\psi|_{ \Omega} & = \psi|_{\Omega^{\mathsf c}}  && \mbox{on $\Gamma$},\\
\varepsilon_1 \partial_{\mathbf n} \psi|_{\Omega} & = \varepsilon_2
\partial_{\mathbf n}  \psi|_{\Omega^{\mathsf c}}  && \mbox{on $\Gamma$},
\end{aligned}
\right.
\end{array}
\end{equation}
where $\mathbf n$ is
the unit normal vector pointing outward from $\Omega$ to $\Omega^{\mathsf c}$ and $\varepsilon_2$ is the dielectric constant of solvent.
We further assume that the potential $\psi$ behaves like $1/|\mathbf x|$ as $|\mathbf x|$ tends to infinity, and $\Omega$ is a bounded Lipschitz domain.

It is worth noting that when $\kappa = 0$, the LPB model reduces to the polarizable continuum model (PCM), which is also commonly used in solvation energy calculations \cite{cammi2007continuum}. On the other hand, when $\kappa = \infty$, the solvent is treated as a perfect conductor, and the LPB model simplifies to a Poisson equation defined only on $\Omega$. This simplified model is known as the conductor-like screening model (COSMO) \cite{klamt1993cosmo}.

Generally speaking, there are three types of widely-used methods to solve the PB model: boundary element method (BEM), finite difference method (FDM), and finite element method (FEM). In the BEM, the LPB equation is recast as integral equations defined on the two-dimensional solute-solvent interface. Then surface meshes are generated (for example, using MSMS \cite{sanner-1996} or NanoShaper \cite{zhang2006quality}) to solve the integral equations. The BEM is efficient to solve the LPB equation, especially when acceleration techniques are used, such as the fast multipole method \cite{boschitsch2002fast,altman2009accurate,bajaj2011efficient} and the hierarchical ``treecode'' technique \cite{geng2013treecode,chen2018preconditioning}. The PB-SAM solver developed by Head-Gordon et al. \cite{lotan2006analytical,yap2010new,yap2013calculating} discretizes the solute-solvent interface (such as the van der Waals surface) with grid points on atomic spheres like a collocation method and solves the associated linear system by use of the fast multipole method. 
In general, it is difficult to apply BEM to the nonlinear Poisson--Boltzmann (NPB) equation. To solve the NPB equation, there have been some efficient FDM solvers, including UHBD \cite{madura1995electrostatics}, DelPhi \cite{li2012delphi}, MIBPB \cite{chen2011mibpb}, and APBS \cite{baker2001electrostatics,dolinsky2007pdb2pqr,jurrus2018improvements}. In particular, the APBS developed by Baker, Holst, McCammon et al. is popular and has many useful options. The FEM provides in general more flexibility for mesh refinement, more analysis of convergence, and more selections of linear solvers. A rigorous solution and approximation theory of the FEM for the PB equation has been established in \cite{chen2007finite}. The adaptive FEM developed by Holst et al. has tackled some important issues of the PB equation \cite{holst2000adaptive,baker2000adaptive}. Note that the BEM and FEM are mesh-dependent where the mesh generation itself might be complicated. The FDM depends on the grid, and the grid refinement in three dimensions could result in a dramatic increase of computational cost.

A special domain decomposition (dd) method for the COSMO model in van der Waals (VDW) cavity was developed in 2013 by Canc\`es, Maday, and Stamm \cite{cances2013domain}. This method, called ddCOSMO, is meshless and can perform up to three orders of magnitude faster than the equivalent algorithm in Gaussian, as shown in \cite{lipparini2013fast,lipparini2014quantum,lipparini2014quantum2}.
The ddCOSMO method decomposes the solute cavity $\Omega$ into balls and transforms the original problem into a group of easy-to-solve Laplace subproblems defined in these balls. Since its development, the ddCOSMO solver has been generalized to the PCM and LPB models, respectively, resulting in the ddPCM \cite{stamm2016new} and ddLPB methods \cite{quan2019domain}. Additionally, the method has been extended to the PCM with smooth interface \cite{Quan2016760,quan2018domain}.

Regarding the convergence analysis, the ddCOSMO method exhibits linear scaling with respect to the number of atoms and the first results on these scaling properties in a simplified setting can be found in a series of works by Ciaramella and Gander \cite{ciaramella2017analysis,ciaramella2018analysis,ciaramella2018analysis2}. Recently, Reusken and Stamm proposed specific geometrical descriptors and demonstrated how the convergence rate of the ddCOSMO solver depends on them \cite{reusken2021analysis}.
In the ddLPB method, the transmission problem defined by the interior Poisson equation and the exterior screening Poisson equation are solved 
iteratively by solving only two interior problems defined by the (screening) Poisson equations, with the same Dirichlet boundary condition on $\partial \Omega$ obtained from the previous iteration. After each iteration, this Dirichlet boundary condition is then updated by the screening single layer potential 
based on the resolved solutions. This process is repeated until a stopping criterion is reached. Despite the good numerical performance of ddLPB method, its convergence analysis remains unknown.

In the classical works \cite{lions1988schwarz, lions1990schwarz}, Lions studied Schwarz alternating methods for overlapping and non-overlapping bounded subdomains, respectively, which are apparently different from the ddLPB method. 
The LPB equation can be seen as an interior-exterior transmission problem, and the ddLPB method deals with interior-exterior coupling through arbitrary interface without boundary integral equation.
The domain decomposition method closest to the ddLPB may be Yu's Dirichlet-Neumann method \cite{yu1997domain}. In his work, Yu gives the convergence analysis for Poisson equation on an unbounded domain with a closed curve $\Gamma_0$ as its boundary. He divides the unbounded domain into two parts with a circle interface containing $\Gamma_0$, so that the exterior subproblem defined outside a circle becomes easy to solve.
However, in our case, the parameters $\varepsilon$ and $\kappa$ has a jump on a  general complicated interface $\Gamma$ (not a circle), so that there is no explicit formula of solution of the exterior subproblem. 

In this work, we establish the convergence of the interior-exterior iteration for the ddLPB method, which is a nonoverlapping Schwarz alternating method with a bounded interior and an unbounded exterior.
We first prove that $T_{\rm e} + T_{\rm c}$ is spectrally equivalent to $T_{\rm r} + T_{\rm c}$, where $T_{\rm r},~ T_{\rm c},~T_{\rm e}$ are Dirichlet-to-Neumann (DtN) operators defined in \eqref{eq:Tr}--\eqref{eq:Te}. 
The proof is based on an $L^2$-norm estimate for arbitrary harmonic function (see Lemma \ref{lem:1}) with the definition of an interior-exterior Sobolev constant.
We show that this spectral equivalence can result in convergence by equipping the Hilbert space $H^{\frac12}(\Gamma)$ with a new inner product, and consequently the convergence of ddLPB is ensured if the relaxation parameter satisfies $0<\alpha<2$ for realistic implicit solvent models. This interprets the good numerical performance of ddLPB solver in \cite{quan2019domain}, where $\alpha$ is set to $1$. 

In the following content, $C$ is a generic uniform constant in different inequalities. When it is required, we will emphasize its dependency on the parameters and the domain in the governing partial differential equation. 
  
 The article is organized as follows. 
After shortly introducing the original ddLPB method, we first reinterpret the ddLPB method as a preconditioned Richardson iteration for updating the potential on interface and generalize the original ddLPB method by adding a relaxation parameter $\alpha\in \mathbb R$ in Section \ref{sect2}.
Then, in Section \ref{sect3}, we prove the convergence of the ddLPB method by establishing the spectral equivalence of 
the operator and its preconditioner.
To achieve this, we introduce a new inner product on the Hilbert space $H^{\frac12}(\Gamma)$, and establish an $L^2$-norm estimate for any harmonic function, using an interior-exterior constant $C_{\rm ie}^r$. We then use this estimate to prove the desired spectral equivalence, which in turn guarantees the convergence of the ddLPB method for a suitable choice of the relaxation parameter $\alpha$.
In Section \ref{sec:num}, we present numerical simulations to verify the convergence of the ddLPB method, and provide some concluding remarks in the last section.
  
\section{An interior-exterior domain decomposition method}\label{sect2}

We first transform the LPB equation to two coupled subproblems defined in the bounded domain $\Omega$. Then we introduce a preconditioner for the equation associated with boundary potential and derive a Richardson iteration. 
This iteration yields a generalized ddLPB method. In comparison to the original approach \cite{quan2019domain}, a relaxation parameter is used to accelerate the convergence.

\subsection{Problem transformation}
Let $\psi_0$ be the potential generated by $\rho_{\rm m}$ in vacuum that satisfies the following equation:
\begin{equation}
- \Delta \psi_0 = 4\pi \varepsilon_1^{-1} \rho_{\rm m} \quad \mbox{in }\mathbb R^3.
\end{equation}
The potential $\psi_0$ can be obtained explicitly by calculating the following integral:
\begin{equation}
\psi_0(\mathbf x) = \int_{\mathbb R^3} \frac{\rho_{\rm m}(\mathbf
y)}{\varepsilon_1\left|\mathbf x - \mathbf y\right|} \,{\rm d} \mathbf y, \quad
\mathbf x\in \mathbb R^3.
\end{equation}
For simplicity, we assume that $\psi_0$ is already given.
Then, \eqref{eq:lpb} can be recast equivalently to two coupled partial differential equations (PDEs) of reaction potential $\psi_{\rm r}$ and extended potential $\psi_{\rm e}$, both defined in $\Omega$ (see \cite{quan2019domain} for the derivation): 
\begin{equation}\label{eq:lpb2}
\begin{array}{r@{}l}
\left\{
\begin{aligned}
- \Delta \psi_{\rm r} & = 0  && \mbox{in $\Omega$},\\
-\Delta \psi_{\rm e} + \kappa^2 \psi_{\rm e}& = 0 && \mbox{in $\Omega$},
\end{aligned}
\right.
\end{array}
\end{equation}
subject to two coupling conditions
\begin{equation}\label{eq:coupling}
\begin{array}{r@{}l}
\left\{
\begin{aligned}
 \psi_{\rm r} & = \psi_{\rm e} - \psi_0 && \mbox{on $\Gamma$},\\
\psi_{\rm e} & = \mathcal S_{\kappa} \left( \partial_{\mathbf n} \psi_{\rm e} -
{\varepsilon_1}{\varepsilon_2^{-1}} \partial_{\mathbf n} \left(\psi_{\rm r}
+\psi_0\right) \right) && \mbox{on $\Gamma$},
\end{aligned}
\right.
\end{array}
\end{equation}
where $\psi_{\rm r} = \psi|_\Omega - \psi_0$ is the reaction potential and
$\psi_{\rm e}$ is an extended potential, both defined in $\Omega$.
We define the invertible single-layer operator $\mathcal S_\kappa: H^{-\frac 12}(\Gamma)\rightarrow H^{\frac
12}(\Gamma)$ as follows:
\begin{equation}\label{eq:S_kappa0}
\left(\mathcal S_\kappa \sigma_\Gamma\right) (\mathbf x) \coloneqq \int_\Gamma
\frac{\exp\left(-\kappa\left|\mathbf x-\mathbf s'\right|\right) \sigma_\Gamma(\mathbf
s')}{4\pi\left|\mathbf x - \mathbf s'\right|} \, \mathrm d \mathbf s' \quad \text{for all }
\sigma_\Gamma\in H^{-\frac 1 2}(\Gamma) \text{ and } \mathbf x\in \Gamma.
\end{equation}
In the special case where $\kappa=0$, the homogeneous screened Poisson (HSP) equation (\ref{eq:lpb2}) reduces to the Laplace equation, and we denote the corresponding single-layer operator as $\mathcal S_0$.

The original problem \eqref{eq:lpb}, defined in $\mathbb R^3$, is equivalently transformed into two subproblems \eqref{eq:lpb2}, which are defined in $\Omega$ and coupled at the interface $\Gamma$ by the conditions \eqref{eq:coupling}. This transformation recasts the original interior-exterior transmission problem as an interior-interior transmission problem.

\subsection{Richardson iteration}

The LPB problem \eqref{eq:lpb2}--\eqref{eq:coupling} can be seen as an interface problem: find $g\in H^{\frac 1 2}(\Gamma)$, satisfying
\begin{equation}
T_{\rm r}\left(g-\psi_{0}\right) + \varepsilon_1 \partial_{\mathbf n} \psi_0 +
T_{\rm c} \, g = 0,
\end{equation}
that is,
\begin{equation}\label{eq:interface}
 \left(T_{\rm r} + T_{\rm c}\right) g = -\varepsilon_1\partial_{\mathbf n}
\psi_0 + T_{\rm r} \psi_0,
\end{equation}
where $T_{\rm r}: g \mapsto \varepsilon_1 \partial_{\mathbf n} u_{\rm r}$ is a DtN operator in the bounded domain $\Omega$ that satisfies
\begin{equation}\label{eq:Tr}
\begin{array}{r@{}l}
\left\{
\begin{aligned}
- \Delta u_{\rm r} & = 0  && \mbox{in $\Omega$},\\
u_{\rm r}& = g && \mbox{on $\Gamma$},
\end{aligned}
\right.
\end{array}
\end{equation}
and $T_{\rm c}: g \mapsto -\varepsilon_2 \partial_{\mathbf n}u_{\rm c}$ is another DtN operator in the exterior domain $\Omega^{\mathsf c}$, satisfying
\begin{equation}\label{eq:Tc}
\begin{array}{r@{}l}
\left\{
\begin{aligned}
- \Delta u_{\rm c} + \kappa^2 u_{\rm c}& = 0  && \mbox{in $\Omega^{\mathsf
c}$},\\
u_{\rm c}& = g && \mbox{on $\Gamma$},\\
u_{\rm c}& \rightarrow O(|\mathbf x|^{-1})&& \mbox{as } |\mathbf x|\rightarrow \infty.
\end{aligned}
\right.
\end{array}
\end{equation}
Furthermore, we define the following DtN operator $T_{\rm e}: g \mapsto
\varepsilon_2 \partial_{\mathbf n}u_{\rm e}$, satisfying 
\begin{equation}\label{eq:Te}
\begin{array}{r@{}l}
\left\{
\begin{aligned}
- \Delta u_{\rm e} + \kappa^2 u_{\rm e}& = 0  && \mbox{in $\Omega$},\\
u_{\rm e}& = g && \mbox{on $\Gamma$}.
\end{aligned}
\right.
\end{array}
\end{equation}
For any $g\in H^{\frac 1 2}(\Gamma)$, the corresponding surface charge density $\sigma\in H^{-\frac 1 2}(\Gamma)$ satisfies
\begin{equation}
\begin{aligned}
\sigma = {\varepsilon_2^{-1}} \left(T_{\rm e} + T_{\rm c} \right) g, 
\end{aligned}
\end{equation}
which yields from \cite[Theorem 3.3.1]{Sauter2011} that
\begin{equation}
g = \varepsilon_2^{-1}\mathcal S_\kappa \left(T_{\rm e}+T_{\rm c}\right) g\quad\forall g\in H^{\frac12}(\Gamma),
\end{equation}
or again, $T_{\rm e}+ T_{\rm c}$ is invertible and
\begin{equation}\label{eq:inv_Ska}
\left(T_{\rm e}+ T_{\rm c}\right)^{-1} = \varepsilon_2^{-1}\mathcal S_\kappa :\, H^{-\frac 1 2}(\Gamma) \rightarrow H^{\frac 1 2}(\Gamma).
\end{equation}
We now take $\left(T_{\rm e}+ T_{\rm c}\right)^{-1}$ as a preconditioner for \eqref{eq:interface}, to obtain 
\begin{equation}
\left(T_{\rm e}+ T_{\rm c}\right)^{-1}\left[- \left(T_{\rm r} + T_{\rm c}\right) g +\left(-\varepsilon_1\partial_{\mathbf
n} \psi_0 + T_{\rm r} \psi_0\right) \right] = 0.
\end{equation}
The Richardson iteration for solving this equation is
\begin{equation}\label{eq:iteration}
\begin{array}{r@{}l}
\begin{aligned}
g^{k+1} & =g^k +  \alpha\,\left(T_{\rm e}+ T_{\rm c}\right)^{-1} \left[ -
\left(T_{\rm r} + T_{\rm c}\right) g^k + \left(-\varepsilon_1\partial_{\mathbf
n} \psi_0 + T_{\rm r} \psi_0\right) \right]\\
& =\left[I-\alpha \left(T_{\rm e}+ T_{\rm c}\right)^{-1} 
\left(T_{\rm r} + T_{\rm c}\right) \right] g^k +  \alpha\,\left(T_{\rm e}+ T_{\rm c}\right)^{-1} \left(-\varepsilon_1\partial_{\mathbf
n} \psi_0 + T_{\rm r} \psi_0\right) \\
 & = (1-\alpha) \, g^k + \alpha\, \left(T_{\rm e}+ T_{\rm c}\right)^{-1} 
\left[  T_{\rm e} \, g^k-  T_{\rm r} \left(g^k -\psi_0\right)
-\varepsilon_1\partial_{\mathbf n} \psi_0  \right] \\
& = (1-\alpha) \, g^k + \alpha\, \varepsilon_2^{-1}\mathcal S_\kappa \left[
\varepsilon_2 \partial_{\mathbf n} \psi_{\rm e}^k - 
{\varepsilon_1}\partial_{\mathbf n} \psi_{\rm r}^{k} 
-\varepsilon_1\partial_{\mathbf n} \psi_0  \right] \\
& = (1-\alpha) \, g^k + \alpha  \mathcal S_\kappa \left[  \partial_{\mathbf n}
\psi_{\rm e}^k -  {\varepsilon_1}{\varepsilon_2^{-1}}\partial_{\mathbf n}
\left( \psi_{\rm r}^{k} +  \psi_0 \right) \right],
\end{aligned}
\end{array}
\end{equation}
where $\alpha\in \mathbb R$ is an appropriate scalar relaxation parameter. Here, $I$ is the identity operator.


\subsection{Generalized ddLPB method}
The original ddLPB method in \cite{quan2019domain} updates the potential on the solute-solvent interface at each external iteration. We propose a generalization of the ddLPB method, using the Richardson iteration \eqref{eq:iteration}. The new procedure still involves five steps, as shown in Algorithm \ref{algo}. Unlike the original ddLPB method where $\alpha$ is fixed to 1 in step 4, the relaxation parameter $\alpha\in \mathbb R$ is used that controls the step size.

It was observed in the numerical experiments conducted in \cite{quan2019domain} that the interfacial iteration \eqref{eq:iteration} in the ddLPB procedure converges quickly with respect to the iteration number $k$ when $\alpha=1$. We will demonstrate that the generalized method converges if $\alpha$ satisfies a simple restriction. The choice of this parameter can impact the convergence rate. Therefore, selecting an appropriate value of $\alpha$ can accelerate the algorithm by reducing the number of external iterations required.

\begin{algorithm}[!ht]
\caption{Generalized ddLPB method}
\begin{enumerate}
\item Let $g^0$ be an initial guess of $\psi_{\rm e}|_{\Gamma}$ and set the iteration number to $k=1$.
\item 
Solve the Laplace equation 
\begin{equation}
\begin{array}{r@{}l}
\left\{
\begin{aligned}
-\Delta\psi_{\rm r}^k&= 0 && \mbox{in } \Omega ,\\
\psi_{\rm r}^k & = g^{k-1} - \psi_{0} && \mbox{on }\Gamma,
\end{aligned}
\right. \label{eq:dd_laplace}
\end{array}
\end{equation}
and obtain its Neumann boundary trace $\partial_{\mathbf n} \psi_{\rm r}^k$ on
$\Gamma$.
\item
Solve the screened Poisson equation
\begin{equation}
\begin{array}{r@{}l}
\left\{
\begin{aligned}
- \Delta\psi_{\rm e}^k + \kappa^2 \psi_{\rm e}^k& = 0 && \mbox{in } \Omega ,\\
\psi_{\rm e}^k & = g^{k-1} && \mbox{on }\Gamma,
\end{aligned}
\right.
\end{array}\label{eq:dd_screen}
\end{equation}
and obtain its Neumann boundary trace $\partial_{\mathbf n}\psi_{\rm e}^k$ on
$\Gamma$.
\item 
Update the Dirichlet boundary condition 
\begin{equation}\label{eq:update}
g^k = (1-\alpha)\, g^{k-1} + \alpha \, \mathcal {S}_{\kappa}
\left[\partial_{\mathbf n}\psi_{\rm e}^k -
{\varepsilon_1}{\varepsilon_2^{-1}}\partial_{\mathbf n}\left(\psi_0+\psi_{\rm
r}^k \right)\right]
\end{equation}
with an appropriate constant $\alpha\in \mathbb R$.
\item 
Repeat Step 2--5 until some convergence tolerance is reached.
\end{enumerate}
\label{algo}
\end{algorithm}

\section{Convergence analysis}\label{sect3}

In this section, we propose a concept of interior-exterior constant
to derive the spectral equivalence between $T_{\rm e} + T_{\rm c}$ and $T_{\rm r} + T_{\rm c}$. Then, we show that such equivalence ensures the convergence of the Richardson iteration \eqref{eq:iteration} in the ddLPB solver when $0<\alpha<2$ and $\varepsilon_1\leq \varepsilon_2$. 

\subsection{Interior-exterior estimate}\label{sect3.3}

To establish the spectral equivalence of $T_{\rm e} + T_{\rm c}$ and $T_{\rm r} + T_{\rm c}$, we introduce the interior-exterior constant.

\begin{definition}[Interior-exterior constant]
 Given a bounded Lipschitz domain $\Omega$ with boundary $\partial \Omega$, the interior-exterior constant, for $r=0$ or $1$,  is defined by 
	\begin{equation}\label{def:CP}
	C_{\rm ie}^r := \sup_{g\neq 0\in H^{\frac 1 2}(\partial \Omega)  } \frac{ \|u_{\rm
r}\|_{H^r(\Omega)}^2 }{\|u_{\rm c1}\|_{H^1(\Omega^{\mathsf c})}^2},
	\end{equation}
	where $u_{\rm r}$ and $ u_{\rm c1}$ are respectively the unique solutions to
\begin{equation}\label{eq:ur_1}
\left\{
\begin{aligned}
	- \Delta u_{\rm r} & = 0  && \mbox{in $\Omega$},\\
	u_{\rm r}& = g && \mbox{on $\partial \Omega$},
\end{aligned}
\right.
\end{equation}
and
\begin{equation}
	\left\{
	\begin{aligned}\label{eq:uc_1}
	- \Delta u_{\rm c1} +  u_{\rm c1}& = 0  && \mbox{in $\Omega^{\mathsf c}$},\\
	u_{\rm c1}& = g && \mbox{on $\partial \Omega$},\\
	u_{\rm c1}& \rightarrow O(|\mathbf x|^{-1})&& \mbox{as } |\mathbf x|\rightarrow \infty.
	\end{aligned}
	\right.
\end{equation}
\end{definition}
Note that the existence and uniqueness of $u_{\rm r}$ in \eqref{eq:ur_1} and $u_{\rm c1}$ in \eqref{eq:uc_1} are not difficult to be obtained from Lax--Milgram lemma \cite[Theorem 2.10.4, Theorem 2.10.7]{sauter2010BEM}. 
Using the regularity of the solution $u_{\rm r}$ \cite[Theorem 2.10.4]{sauter2010BEM} and the boundedness of the trace operator $\gamma_0: H^1(\Omega^{\mathsf c})\rightarrow H^{\frac12}(\partial \Omega)$ \cite[Theorem 2.6.8]{sauter2010BEM}, there exist two constants $C$ and $\widetilde C$ depending on $\Omega$ such that
\begin{equation}\label{eq:tracethm}
    \|u_{\rm r}\|_{L^2(\Omega)}\leq \|u_{\rm r}\|_{H^1(\Omega)}\leq C \|g\|_{H^{\frac12}(\partial \Omega)}\leq \widetilde C \| u_{\rm c1}\|_{H^1(\Omega^{\mathsf c})} \quad\forall g\in H^{\frac12}(\partial \Omega).
\end{equation}
This implies that $C_{\rm ie}^r$ are both well-defined and finite.

With the definition of interior-exterior constant, we have the following estimation of the solution to the Laplace equation \eqref{eq:Tr}.  


\begin{lemma}[Interior-exterior estimate]\label{lem:1}
Given any $ g\in H^{\frac 1 2}(\Gamma)$, suppose that $u_{\rm r}$ is the solution to
\eqref{eq:Tr}. 
Then we have
\begin{equation}
\|u_{\rm r}\|_{L^2(\Omega)}^2  \leq C_{\rm ie}^0 \, \varepsilon_2^{-1} \max\{1,\kappa^{-2}\} 
\left< T_{\rm c}\, g, g \right>.
\end{equation}
where $C_{\rm ie}^0$ is the interior-exterior constant defined in \eqref{def:CP} depending only on $\Omega$.
\end{lemma}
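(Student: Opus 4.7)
The plan is to chain two estimates: one produced by the definition of the interior–exterior Sobolev constant $C_{\rm S}$, and one produced by a variational comparison between the unit-screened and $\kappa$-screened exterior problems. Since the function $u_{\rm r}$ solving \eqref{eq:Tr} is precisely the interior Laplace solution $u_{\rm in}$ appearing in \eqref{def:CP} (both are the harmonic extension of $g$ into $\Omega$), applying \eqref{def:CP} with $U=\Omega$ immediately yields
\[
\|u_{\rm r}\|_{L^2(\Omega)}^2 \;\leq\; C_{\rm S}\,\|\widetilde u_{\rm ex}\|_{H^1(\Omega^{\mathsf c})}^2,
\]
where $\widetilde u_{\rm ex}$ is the unit-screened exterior solution from \eqref{eq:uc_1} with Dirichlet data $g$.

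Next, I would upper-bound $\|\widetilde u_{\rm ex}\|_{H^1(\Omega^{\mathsf c})}^2$ by the exterior energy of $u_{\rm c}$ from \eqref{eq:Tc}. The Dirichlet principle characterizes $\widetilde u_{\rm ex}$ as the unique minimizer of $v\mapsto \int_{\Omega^{\mathsf c}}(|\nabla v|^2+v^2)$ over $v\in H^1(\Omega^{\mathsf c})$ with $v|_\Gamma=g$. Since $u_{\rm c}$ has the same Dirichlet trace and belongs to $H^1(\Omega^{\mathsf c})$ by the decay at infinity, testing the minimization at $v=u_{\rm c}$ gives
\[
\|\widetilde u_{\rm ex}\|_{H^1(\Omega^{\mathsf c})}^2 \;\leq\; \int_{\Omega^{\mathsf c}}(|\nabla u_{\rm c}|^2+u_{\rm c}^2).
\]
By Green's first identity applied to $u_{\rm c}$ in $\Omega^{\mathsf c}$, together with its PDE and the vanishing of the boundary term at infinity,
\[
\int_{\Omega^{\mathsf c}}(|\nabla u_{\rm c}|^2+\kappa^2 u_{\rm c}^2) \;=\; -\int_\Gamma (\partial_{\mathbf n}u_{\rm c})\,g\,{\rm d}s \;=\; \varepsilon_2^{-1}\langle T_{\rm c}\,g,g\rangle,
\]
where the sign reflects that $\mathbf n$ points out of $\Omega$ and hence into $\Omega^{\mathsf c}$. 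Pulling out the factor $\kappa^{-2}$ from this last integral and chaining the three estimates produces the claimed inequality.

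I expect the main obstacle to be the final weight-change step: a crude term-by-term bound $\int(|\nabla u_{\rm c}|^2+u_{\rm c}^2)\leq \kappa^{-2}\int(|\nabla u_{\rm c}|^2+\kappa^2 u_{\rm c}^2)$ only factors cleanly when $\kappa\leq 1$, since otherwise the gradient term has the wrong coefficient. A natural remedy is to introduce the dilation $y=\kappa x$, which converts $u_{\rm c}$ into a solution of the unit-screened Helmholtz-type equation on the rescaled exterior $\kappa\Omega^{\mathsf c}$; the Jacobian supplies exactly the compensating powers of $\kappa$ needed to reconcile $\|u_{\rm r}\|_{L^2(\Omega)}^2$ with $\varepsilon_2^{-1}\kappa^{-2}\langle T_{\rm c}\,g,g\rangle$ so that the shape-dependent constant $C_{\rm S}$ alone controls the ratio. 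Tracking this change of variables carefully (together with the well-posedness arguments already used for Lemma \ref{lem3}) is where the real work of the proof should live.
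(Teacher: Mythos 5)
Your chain of estimates is essentially the paper's proof: reduce to the unit-screened exterior energy via the definition of $C_{\rm S}$, compare it with the $\kappa$-screened energy of $u_{\rm c}$, and identify the latter with $\varepsilon_2^{-1}\langle T_{\rm c}\,g,g\rangle$ by Green's identity. The only cosmetic difference is the middle step: you invoke the Dirichlet principle with $u_{\rm c}$ as competitor, while the paper tests the weak form of \eqref{eq:uc_1} against $u_{{\rm c},\kappa}$ and applies Young's inequality; both yield the identical bound $\|u_{{\rm c},1}\|_{H^1(\Omega^{\mathsf c})}^2\le\int_{\Omega^{\mathsf c}}\bigl(|\nabla u_{{\rm c},\kappa}|^2+u_{{\rm c},\kappa}^2\bigr)$. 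Regarding the weight-change obstacle you flag: the paper performs exactly the crude term-by-term bound without comment, so its proof (like yours) is only valid for $\kappa\le 1$; the sole acknowledgment is the remark that $\kappa$ is small in practice ($\kappa=0.104$ in the numerics). Your proposed dilation remedy would not repair the general case, because $C_{\rm S}$ is not scale-invariant (the appendix gives $C_{\rm S}\le R^3/3$ for a ball of radius $R$), so rescaling trades $C_{\rm S}(\Omega)$ for $C_{\rm S}(\kappa\Omega)$ and the powers of $\kappa$ do not cancel; the clean fix is simply to state the lemma with the constant $C_{\rm S}\,\varepsilon_2^{-1}\max\{1,\kappa^{-2}\}$, since $\int_{\Omega^{\mathsf c}}\bigl(|\nabla u|^2+u^2\bigr)\le\max\{1,\kappa^{-2}\}\int_{\Omega^{\mathsf c}}\bigl(|\nabla u|^2+\kappa^2u^2\bigr)$ for all $\kappa>0$.
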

\begin{proof}
For $ g\in H^{\frac 1 2}(\Gamma)$, $u_{\rm c}$ is the solution to \eqref{eq:Tc} and $u_{\rm c1}$ is the solution to \eqref{eq:uc_1}. 
Multiplying \eqref{eq:Tc} with $u_{\rm c}$ and integrating over
$\Omega^{\mathsf c}$, we can obtain
\begin{equation}\label{eq:lem1_3}
\left< T_{\rm c}\, g, g \right>= \varepsilon_2 \int_{\Omega^{\mathsf c}} \left(
\left|\nabla u_{\rm c}\right|^2 + \kappa^2 u_{\rm c}^2
\right) \geq 0.
\end{equation} 
Further, multiplying with first $u_{\rm c1}$ and then $u_{\rm c}$ on both sides of $\eqref{eq:uc_1}$
and integrating over $\Omega^{\mathsf c}$, we have
\begin{equation}
\begin{aligned}
 \lVert u_{\rm c1} \rVert_{H^1(\Omega^{\mathsf c})}^2 
&= \int_\Gamma \left(\partial_{\mathbf n} u_{\rm c1}\right) u_{\rm c1} 
= \int_\Gamma \left(\partial_{\mathbf n} u_{\rm c1}\right) u_{{\rm c}} = \int_{\Omega^{\mathsf c}} \left( \nabla u_{\rm c1} \cdot \nabla u_{\rm c} + u_{\rm c1} u_{\rm c}\right)\\
& \leq \frac{1}{2} \int_{\Omega^{\mathsf c}} \left(\lvert \nabla u_{\rm c}  \rvert^2 + u_{\rm c} ^2 + \lvert\nabla u_{\rm c1} \rvert^2 + u_{\rm c1}^2\right),
\end{aligned}
\end{equation}
which implies that
\begin{equation}
\lVert u_{\rm c1} \rVert_{H^1(\Omega^{\mathsf c})}^2  \leq 
\lVert u_{\rm c} \rVert_{H^1(\Omega^{\mathsf c})}^2.
\end{equation}
Combining this inequality with \eqref{eq:lem1_3}, we estimate
\begin{equation}\label{eq36}
\lVert u_{\rm c1} \rVert_{H^1(\Omega^{\mathsf c})}^2  \leq 
\lVert u_{\rm c} \rVert_{H^1(\Omega^{\mathsf c})}^2\leq \varepsilon_2^{-1}  \max\{1,\kappa^{-2}\} \left< T_{\rm c}\, g, g
\right>.
\end{equation}
According to the Definition \eqref{def:CP}, we then obtain the stated result.
\end{proof}

The interior-exterior constant $C_{\rm ie}^r$ is important in our convergence analysis. However, the precise estimation is tough.
In the special case where $\Omega$ is a ball with radius $R$,
the interior-exterior constant can be estimated by $C_{\rm ie}^0\leq \frac{1}{3}R^2$ and $C_{\rm ie}^1\leq \frac{1}{3}R^2+1$.
See Appendix \ref{append} for the estimates.
But for a general domain, this is a challenging task. 

\subsection{Spectral equivalence  between \texorpdfstring{$T_{\rm e} + T_{\rm c}$}{Te+Tc} and \texorpdfstring{$T_{\rm r} + T_{\rm c}$}{Tr+Tc}}
With the newly-defined interior-exterior constant $C_{\rm ie}^r$ in subsection \ref{sect3.3}, we are ready to state and prove the desired spectral equivalence  between $T_{\rm e} + T_{\rm c}$ and $T_{\rm r} + T_{\rm c}$.

\begin{theorem}[Spectral equivalence and convergence]\label{thm:1}
  There exist two positive constants $C_1$ and $ C_2$ independent of
  $g$, such that $\forall g\in H^{\frac 1 2} \left(\Gamma\right)$,
\begin{equation}\label{eq:thm1}
C_1 \left< \left(T_{\rm e}+T_{\rm c}\right)g,g\right> \leq \left< \left(T_{\rm
r}+T_{\rm c}\right)g,g\right> \leq C_2 \left< \left(T_{\rm e}+T_{\rm
c}\right)g,g\right>,
\end{equation}
where 
  \begin{equation}\label{eq:C12}
    C_1= \min\left\{\frac{\varepsilon_1}{\varepsilon_2}, \frac{1}{1+  \max\{1,\kappa^{2}\} C_{\rm ie}^0}
\right\},~C_2=\max\left\{1,\frac{\varepsilon_1}{ \varepsilon_2}\right\}.
  \end{equation}
\end{theorem}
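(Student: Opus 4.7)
The plan is to reduce everything to volume integrals by integration by parts, and then to exploit the energy-minimizing characterizations of the harmonic and screened extensions. For any $g \in H^{\frac12}(\Gamma)$, testing the PDEs \eqref{eq:Tr}, \eqref{eq:Tc}, \eqref{eq:Te} against the respective solutions yields
\begin{equation*}
\langle T_{\rm r}g,g\rangle = \varepsilon_1\!\int_\Omega |\nabla u_{\rm r}|^2,\quad
\langle T_{\rm e}g,g\rangle = \varepsilon_2\!\int_\Omega (|\nabla u_{\rm e}|^2+\kappa^2 u_{\rm e}^2),\quad
\langle T_{\rm c}g,g\rangle = \varepsilon_2\!\int_{\Omega^{\mathsf c}}(|\nabla u_{\rm c}|^2+\kappa^2 u_{\rm c}^2),
\end{equation*}
so the theorem becomes a comparison between three Dirichlet-type energies with the common boundary value $g$.

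For the upper bound, I would use that $u_{\rm r}$ is the Dirichlet-energy minimizer over $\{v \in H^1(\Omega):\,v|_\Gamma=g\}$. Since $u_{\rm e}$ is an admissible competitor, $\int_\Omega |\nabla u_{\rm r}|^2 \leq \int_\Omega |\nabla u_{\rm e}|^2 \leq \int_\Omega(|\nabla u_{\rm e}|^2+\kappa^2 u_{\rm e}^2)$. Multiplying by $\varepsilon_1$ and rewriting in terms of $\langle T_{\rm e}g,g\rangle$ gives $\langle T_{\rm r}g,g\rangle \leq (\varepsilon_1/\varepsilon_2)\langle T_{\rm e}g,g\rangle$, and adding $\langle T_{\rm c}g,g\rangle$ to both sides produces $C_2 = \max\{1,\varepsilon_1/\varepsilon_2\}$.

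For the lower bound, the analogous step requires bounding the screened-extension energy by the harmonic one, and here an extra $L^2$ term appears. Using that $u_{\rm e}$ minimizes $\int_\Omega(|\nabla v|^2+\kappa^2 v^2)$ over $\{v|_\Gamma = g\}$, with the admissible competitor $v = u_{\rm r}$, I obtain
\begin{equation*}
\int_\Omega(|\nabla u_{\rm e}|^2+\kappa^2 u_{\rm e}^2)\;\leq\;\int_\Omega |\nabla u_{\rm r}|^2 + \kappa^2\|u_{\rm r}\|_{L^2(\Omega)}^2.
\end{equation*}
The main obstacle is the $\kappa^2\|u_{\rm r}\|_{L^2(\Omega)}^2$ term, which is not controlled by $\langle T_{\rm r}g,g\rangle$ alone; this is exactly the place where the interior-exterior coupling must come in. Lemma \ref{lem:1} is tailored to this purpose: $\kappa^2\|u_{\rm r}\|_{L^2(\Omega)}^2 \leq C_{\rm S}\varepsilon_2^{-1}\langle T_{\rm c}g,g\rangle$. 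Multiplying the inequality above by $\varepsilon_2$ and inserting this estimate yields
\begin{equation*}
\langle T_{\rm e}g,g\rangle \;\leq\; \frac{\varepsilon_2}{\varepsilon_1}\langle T_{\rm r}g,g\rangle + C_{\rm S}\langle T_{\rm c}g,g\rangle.
\end{equation*}

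The final step is combinatorial. Adding $\langle T_{\rm c}g,g\rangle$ to both sides gives $\langle (T_{\rm e}+T_{\rm c})g,g\rangle \leq \max\{\varepsilon_2/\varepsilon_1,\,1+C_{\rm S}\}\,\langle (T_{\rm r}+T_{\rm c})g,g\rangle$, and inverting this constant yields exactly $C_1 = \min\{\varepsilon_1/\varepsilon_2,\,1/(1+C_{\rm S})\}$. Thus the conceptual work reduces to two one-line minimization arguments, with Lemma \ref{lem:1} (i.e.\ the well-posedness of the interior-exterior Sobolev constant) doing all the heavy lifting that is special to the unbounded-exterior setting.
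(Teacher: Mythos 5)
Your proposal is correct and follows essentially the same route as the paper: the same energy identities for $\langle T_{\rm r}g,g\rangle$, $\langle T_{\rm e}g,g\rangle$, $\langle T_{\rm c}g,g\rangle$, the same reduction of the lower bound to controlling the leftover term $\varepsilon_2\kappa^2\|u_{\rm r}\|_{L^2(\Omega)}^2$ via the interior-exterior estimation of Lemma \ref{lem:1}, and the same final constants $C_1$ and $C_2$. The only cosmetic difference is that you justify the two energy comparisons by the Dirichlet/energy-minimization principle, whereas the paper tests each equation against the other solution and applies Young's inequality --- two interchangeable ways of obtaining the same inequalities.
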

\begin{proof}
Multiplying \eqref{eq:Te} by $u_{\rm e}$ and integrating over $\Omega$, we
obtain
\begin{equation}\label{eq:thm1_1}
\left< T_{\rm e}\, g, g \right> = \varepsilon_2 \int_\Gamma 
\left(\partial_{\mathbf n} u_{\rm e}\right) u_{\rm e} = \varepsilon_2 
\int_\Omega \left( \left|\nabla u_{\rm e}\right|^2+ \kappa^2 u_{\rm
e}^2\right).
\end{equation}
Similarly, multiplying \eqref{eq:Tr} by $u_{\rm r}$ and integrating over
$\Omega$, we have 
\begin{equation}\label{eq:thm1_2}
\left< T_{\rm r}\, g, g \right>= \varepsilon_1 \int_\Omega \left|\nabla u_{\rm
r}\right|^2.
\end{equation}
Furthermore, multiplying \eqref{eq:Te} by $u_{\rm r}$ and integrating over
$\Omega$, we obtain
\begin{equation}\label{eq:thm1_3}
\left< T_{\rm e}\, g, g \right> =  \varepsilon_2 \int_\Gamma
\left(\partial_{\mathbf n} u_{\rm e}\right) u_{\rm r} = \varepsilon_2
\int_\Omega  \left(\nabla u_{\rm e} \cdot \nabla u_{\rm r} +\kappa^2 u_{\rm
e}u_{\rm r}\right).
\end{equation}
Similarly, multiplying \eqref{eq:Tr} by $u_{\rm e}$ and integrating over
$\Omega$, we have 
\begin{equation}\label{eq:thm1_4}
\left< T_{\rm r}\, g, g \right>= \varepsilon_1 \int_\Omega \nabla u_{\rm r}
\cdot \nabla u_{\rm e}.
\end{equation}

We now prove the second inequality of \eqref{eq:thm1}.
According to \eqref{eq:thm1_4}, we have
\begin{equation}
\begin{array}{r@{}l}
\begin{aligned}
\left< T_{\rm r}\, g, g \right> \leq \frac 1 2 \varepsilon_1 \int_\Omega 
\left( \left|\nabla u_{\rm r}\right|^2 +  \left|\nabla u_{\rm
e}\right|^2\right).
\end{aligned}
\end{array}
\end{equation}
Combined with \eqref{eq:thm1_2} and \eqref{eq:thm1_1}, we obtain
\begin{equation}\label{ineq:TrTe}
\begin{array}{r@{}l}
\begin{aligned}
 \left< T_{\rm r}\, g, g \right>  \leq  \varepsilon_1
\int_\Omega  \left|\nabla u_{\rm e}\right|^2 \leq
\frac{\varepsilon_1}{\varepsilon_2} \left< T_{\rm e}\, g, g \right>,
\end{aligned}
\end{array}
\end{equation}
that is,
\begin{equation}
 \left< (T_{\rm r}+T_{\rm c} ) g, g \right>  \leq
\max\left\{\frac{\varepsilon_1}{\varepsilon_2},~1 \right\} \left< (T_{\rm e} +
T_{\rm c} ) g, g \right>.
\end{equation}
 The remaining question is to
prove that there exists a constant $C>0$ independent of $g$, such that,
\begin{equation}\label{eq:thm1_8}
\left< T_{\rm e}\, g, g \right> \leq C \left< \left(T_{\rm r}+T_{\rm
c}\right)g, g \right>.
\end{equation}
According to \eqref{eq:thm1_3}, we have
\begin{equation}\label{eq:3.20}
\begin{array}{r@{}l}
\begin{aligned}
\left< T_{\rm e}\, g, g \right> \leq \frac 1 2 \varepsilon_2 \int_\Omega 
\left( \left|\nabla  u_{\rm e}\right|^2 +  \kappa^2 u_{\rm e}^2\right)
+ \frac 1 2 \varepsilon_2 \int_\Omega  \left( \left|\nabla u_{\rm
r}\right|^2 +  \kappa^2 u_{\rm r}^2\right),
\end{aligned}
\end{array}
\end{equation}
that is, 
\begin{equation}\label{eq:thm1_10}
\begin{array}{r@{}l}
\begin{aligned}
 \left< T_{\rm e}\, g, g \right> \leq
\frac{\varepsilon_2}{\varepsilon_1} \left< T_{\rm r}\, g, g \right>   + 
 {\varepsilon_2}\, \kappa^2 \int_\Omega   u_{\rm r}^2,
\end{aligned}
\end{array}
\end{equation}
obtained from \eqref{eq:thm1_1} and \eqref{eq:thm1_2}. Using Lemma 
\ref{lem:1}, we then obtain 
\begin{equation}
\left< T_{\rm e}\, g, g \right> \leq \frac{\varepsilon_2}{\varepsilon_1} \left<
T_{\rm r}\, g, g \right> +  \max\{1,\kappa^{2}\}  C_{\rm ie}^0\left< T_{\rm c}\, g, g \right>.
\end{equation}
This leads to 
\begin{equation}
\left< ( T_{\rm e}+T_{\rm c} ) g, g \right> \leq
\max\left\{\frac{\varepsilon_2}{\varepsilon_1}, 1+  \max\{1,\kappa^{2}\}  C_{\rm ie}^0
\right\}\left<(T_{\rm r}+T_{\rm c})\, g, g \right>.
\end{equation}
Therefore, \eqref{eq:thm1} holds true.
\end{proof}

\begin{remark}
Using the Green's formula and the trace theorem \eqref{eq:tracethm}, we can deduce another interior-exterior estimate as follows:
    \begin{equation}
    \begin{aligned}
\left< T_{\rm e}\, g, g \right> & = \varepsilon_2 
\int_\Omega \left( \left|\nabla u_{\rm e}\right|^2+ \kappa^2 u_{\rm
e}^2\right) \leq \varepsilon_2 \max\{1,\kappa^2\} \|u_{\rm e}\|_{H^1(\Omega)}^2 \\
& \leq \varepsilon_2 \max\{1,\kappa^2\} C_\Omega \|g\|_{H^{\frac12}(\Gamma)}^2 \leq \varepsilon_2 \max\{1,\kappa^2\} \widetilde C_\Omega \|u_{\rm c}\|_{H^1(\Omega^{\mathsf c})}^2\\
& \leq \widetilde C_\Omega \max\{\kappa^{-2},\kappa^2\} \varepsilon_2 
\int_{\Omega^{\mathsf c}} \left( \left|\nabla u_{\rm c}\right|^2+ \kappa^2 u_{\rm
c}^2\right) \\
& = \widetilde C_\Omega \max\{\kappa^{-2},\kappa^2\} \left< T_{\rm c}\, g, g \right>,
\end{aligned}
\end{equation}
which gives
\begin{equation}
\left< ( T_{\rm e}+T_{\rm c} ) g, g \right> \leq
( 1+  \widetilde C_\Omega \max\{\kappa^{-2},\kappa^2\}) 
\left<(T_{\rm r}+T_{\rm c})\, g, g \right>.
\end{equation}
Here $C_\Omega$ and $\widetilde C_\Omega$ are constants derived from the trace theorem. 
However, when $\kappa\rightarrow 0$ (the LPB model tends to the PCM), the constant $\max\{\kappa^{-2},\kappa^2\}\rightarrow \infty$, making the estimation inequality useless.
Moreover, in practical implicit solvent models, the constant $\kappa$ is usually far less than $1$. Therefore, we adopt the interior-exterior estimate in Theorem \ref{thm:1}, where $C_1$ is independent of $\kappa$ when $\kappa\leq 1$.
\end{remark}

\subsection{Convergence of Richardson iteration}\label{sect3.2}

We now prove rigorously the convergence of the ddLPB solver, i.e., the convergence of Richardson iteration \eqref{eq:iteration}. 
This convergence actually results from the spectral equivalence between $T_{\rm e} + T_{\rm c}$ and $T_{\rm r} + T_{\rm c}$.

We first recall some preliminary results on the spectrum of bounded linear operators in the following lemma.
\begin{proposition}[Properties on spectrum, \cite{reed1972methods}]
	Let $X$ be a Banach space and $L(X)$ be the spaces for bounded linear operators from $X$ into itself. Then :
	\begin{itemize}
		\item[(1)] If $A \in L(X)$ and $P$ is a polynomial, then we have
		\begin{equation}
		\sigma ( P(A) ) = \{ P(\lambda) : \lambda \in \sigma(A) \},
		\end{equation}
		where $\sigma(A)$ is the spectrum of the operator $A$.
		\item[(2)] (Spectral inclusion) If $H$ is a Hilbert space with the inner
product $\langle\cdot,\cdot\rangle_{H}$ and $T\in L(H)$, then 
\begin{equation}
	\sigma\big( T \big) \subset \overline{W(T)} \quad{\rm where} \quad W(T):=\left\{\langle T g,g \rangle_{H} :~ g\in H, ~\langle g,g \rangle_{H} = 1\right\}. \label{eq:numrange}
\end{equation}
\end{itemize}
\label{proppre}
\end{proposition}
\begin{proof}
	See \cite[Lemma 1 following Theorem \uppercase\expandafter{\romannumeral7}.1,
Theorem \uppercase\expandafter{\romannumeral6}.6]{reed1972methods} and
\cite[Theorem 1.2-1]{Numerical_Range}.
\end{proof}

The following theorem shows that the convergence of iteration \eqref{eq:iteration} can be deduced from the spectral equivalence of $T_{\rm e} + T_{\rm c}$ and $T_{\rm r} + T_{\rm
c}$.
\begin{theorem}
The spectrum of $\left(T_{\rm e} + T_{\rm c}\right)^{-1} (T_{\rm r} + T_{\rm
c})$ is bounded as follows
\begin{equation}
	\sigma\big( (T_{\rm e} + T_{\rm c})^{-1}(T_{\rm r} + T_{\rm c}) \big) \subset [C_1, C_2]. \label{eq:spectrum}
\end{equation}
where the constants $C_1$ and $C_2$ are defined in Theorem \ref{thm:1}. \label{thm31}
\end{theorem}
\begin{proof}

We first provide a new scalar product over $H^{\frac12}(\Gamma)$ that will be suitable for the proof. We notice that the bilinear forms $\langle T_{\rm e}\cdot,\cdot\rangle$ and $\langle T_{\rm c}\cdot,\cdot\rangle$ are symmetric and positive definite.
Indeed, for $g_1,~g_2\in H^{\frac12}(\Gamma)$, denote $u_{\rm e,1}$ and $u_{\rm e,2}$ the solutions of \eqref{eq:Te} with the boundary $g_1$ and $g_2$, respectively.
Multiplying $u_{\rm e,2}$ and $u_{\rm e,1}$ on both sides of \eqref{eq:Te} and integrating over $\Omega$, we have 
\begin{align}\label{eq:self-adjoint}
	\left<T_{\rm e}\,g_1, g_2\right> = \varepsilon_2 \int_{\Omega} (\nabla u_{\rm e,1} \cdot \nabla u_{\rm e,2} + \kappa^2 u_{\rm e,1}u_{\rm e,2}) = \left<g_1,T_{\rm e}\,g_2\right>.
\end{align}
When $g_1=g_2$, we have
\begin{equation}
\label{eq:H1e}
	\left< T_{\rm e}\, g_1, g_1 \right>= \varepsilon_2  \int_\Omega \left( \left|\nabla u_{\rm e,1}\right|^2+ \kappa^2 u_{\rm e,1}^2\right)\geq 0.
\end{equation}
When $\left< T_{\rm e}\, g_1, g_1 \right>=0$, the fact that $\kappa>0$
yields $u_{\rm e,1}=0$, which implies $g_1 = u_{\rm e,1}|_{\Gamma}
= 0$. 
Thus, $\langle T_{\rm e}\cdot,\cdot\rangle$ is symmetric positive-definite on $H^{\frac12}(\Gamma)$. A similar argument shows that $\langle T_{\rm c}\cdot,\cdot\rangle$ is also symmetric positive-definite on $H^{\frac12}(\Gamma)$.
%
This allows us to define an inner product on $H^{\frac 1 2}(\Gamma)$ as follows
\begin{equation}
\langle g_1,g_2 \rangle_{\rm ec} := \langle (T_{\rm e}+T_{\rm
c})g_1,g_2\rangle \quad \forall g_1,~g_2 \in H^{\frac 1 2}(\Gamma),
\end{equation}
and an induced norm by 
\begin{equation}
\lVert g \Vert_{\rm ec} := \sqrt{ \left<g\, ,g\right>_{\rm ec}}.
\end{equation}

Then we show that this norm $\| \cdot \|_{\rm ec}$ is equivalent to the original one $\|\cdot\|_{H^{\frac12}(\Gamma)}$. 
For any $g\in H^{\frac12}(\Gamma)$, denote $u_{\rm e}$ and $u_{\rm c}$ the solutions of \eqref{eq:Te} and \eqref{eq:Tc}, respectively. A similar argument as \eqref{eq:H1e} gives
\begin{equation}
\label{eq:H1ec}
	\|g\|_{\rm ec}^2= \varepsilon_2  \int_\Omega \left( \left|\nabla u_{\rm e}\right|^2+ \kappa^2 u_{\rm e}^2\right) + \varepsilon_2  \int_{\Omega^{\mathsf c}} \left( \left|\nabla u_{\rm c}\right|^2+ \kappa^2 u_{\rm c}^2\right) \geq C\|u_{\rm e}\|^2_{H^1(\Omega)},
\end{equation}
where $C$ depends on $\varepsilon_2$ and $\kappa$. This together with the fact that the trace operator
$\gamma_0:H^1(\Omega)\rightarrow H^{\frac 1 2}(\Gamma)$ is bounded \cite[Theorem
2.6.8]{sauter2010BEM}, gives
\begin{equation}\label{eq:equ_norm1}
	\|g\|^2_{H^{\frac12}(\Gamma)} \leq C \|u_{\rm e}\|^2_{H^1(\Omega)} \leq C\|g\|_{\rm ec}^2.
\end{equation}
According to \cite[Theorem 2.7.7,~Theorem 2.10.4,~Theorem 2.10.7]{sauter2010BEM}, 
we have
\begin{equation}\label{eq:3.23}
\| (T_{\rm e} + T_{\rm c})\, g \|_{H^{-\frac 1 2}(\Gamma)}
\leq C(\| u_{\rm e}\|_{H^1(\Omega)} + \| u_{\rm c}\|_{H^1(\Omega^{\mathsf c})})
\leq C\|g\|_{H^{\frac12}(\Gamma)},
\end{equation}
implying
\begin{equation}\label{eq:equ_norm2}
	\|g\|_{\rm ec}^2 = \left< (T_{\rm e}+T_{\rm c})\, g, g \right> \leq \| (T_{\rm e} + T_{\rm c})\, g \|_{H^{-\frac 1 2}(\Gamma)} \|g\|_{H^{\frac12}(\Gamma)} \leq C \|g\|_{H^{\frac12}(\Gamma)}^2.
\end{equation}
Combining \eqref{eq:equ_norm1} and \eqref{eq:equ_norm2}, we conclude that the norms $\|\cdot\|_{\rm ec}$ and $\|\cdot\|_{H^{\frac12}(\Gamma)}$ are equivalent, hence $\big( H^{\frac 1 2}(\Gamma), ~\left<\cdot,\cdot\right>_{\rm ec} \big)$ is a Hilbert space.

Let us now consider the operator $(T_{\rm e} + T_{\rm c})^{-1}(T_{\rm r} + T_{\rm c}) : \big(
H^{\frac 1 2}(\Gamma), ~\langle\cdot,\cdot\rangle_{\rm ec} \big) \rightarrow
\big( H^{\frac 1 2}(\Gamma), ~\langle\cdot,\cdot\rangle_{\rm ec} \big)$. 
We derive from \eqref{eq:inv_Ska} that $(T_{\rm e} + T_{\rm c})^{-1}= \varepsilon_2^{-1}\mathcal{S}_{\kappa}$, hence the operator $(T_{\rm e} + T_{\rm c})^{-1} $ is bounded  from $H^{-\frac12}(\Gamma) $ into $ H^{\frac 1 2}(\Gamma) $ \cite[Theorem 3.1.16]{sauter2010BEM}, combining with the boundedness of $T_{\rm r}+T_{\rm c}$ from \eqref{eq:3.23}, we know that $(T_{\rm e} + T_{\rm c})^{-1}(T_{\rm r} + T_{\rm c})$ is a bounded operator on $H^{\frac12}(\Gamma)$. 
The equivalence of $\|\cdot\|_{\rm ec}$ and $\|\cdot\|_{H^{\frac12}(\Gamma)}$ yields that it is also bounded on $\big( H^{\frac 1 2}(\Gamma),
~\langle\cdot,\cdot\rangle_{\rm ec} \big)$

Since $ \left< (T_{\rm e} + T_{\rm c})^{-1}(T_{\rm r} + T_{\rm c}) \,g ,g \right>_{\rm ec} = \left< \left(T_{\rm r}+T_{\rm c}\right)g,g\right>$ and $\|g\|_{\rm ec}^2 = \left<\left(T_{\rm e}+T_{\rm c}\right)g,g\right>$, we deduce from \eqref{eq:thm1} and \eqref{eq:numrange} that
\begin{equation}
	\sigma\big( (T_{\rm e} + T_{\rm c})^{-1}(T_{\rm r} + T_{\rm c}) \big) \subset \overline{W\big( (T_{\rm e} + T_{\rm c})^{-1}(T_{\rm r} + T_{\rm c}) \big)}
\subset [C_1,C_2],
\end{equation}
where we take the Hilbert space $H = \big(
H^{\frac 1 2}(\Gamma), ~\langle\cdot,\cdot\rangle_{\rm ec} \big)$ and $T= (T_{\rm e} + T_{\rm c})^{-1}(T_{\rm r} + T_{\rm c})$ in the second statement of Proposition \ref{proppre}.
\end{proof}

\begin{remark}
    The result in Theorem \ref{thm31} can be directly obtained if the spectrum of $(T_{\rm e} + T_{\rm c})^{-1}(T_{\rm r} + T_{\rm c})$ only has eigenvalues. However, the spectrum of $(T_{\rm e} + T_{\rm c})^{-1}(T_{\rm r} + T_{\rm c})$ can not be reduced to point spectrum and it is shown in Appendix \ref{append:B} that the continuous spectrum does exist for a simple sphere. 
    \end{remark}

\begin{theorem}\label{thm3}
    The Richardson iteration \eqref{eq:iteration} converges for $0 < \alpha < 2/{C_2}$ where $C_2=\max\left\{1,\frac{\varepsilon_1}{ \varepsilon_2}\right\}$.
    Furthermore, we have
    \begin{equation}\label{eq:3.38}
        \|g^{k+1}-g^*\|_{\rm ec} \leq \max\{|1-\alpha C_2|,~|1-\alpha C_1|\} \|g^k-g^*\|_{\rm ec},
    \end{equation}
    where $g^*$ is the limit of $g^k$, i.e., the exact Dirichlet boundary condition.
\end{theorem}
\begin{proof}
Using the first statement of Proposition \ref{proppre} and Theorem \ref{thm31} yields 
\begin{equation}
\label{eq:spe}
\rho\left(I - \alpha\, {\varepsilon_2^{-1}} \mathcal{S}_{\kappa} \left(T_{\rm r}
+ T_{\rm c}\right) \right) \leq \max\left\{|1-\alpha C_2|,~|1-\alpha C_1|\right\}.
\end{equation}
where $\rho(\cdot)$ is the spectral radius.
Thus, the convergence of the iteration \eqref{eq:iteration} is ensured if 
\begin{equation}
\max\{|1-\alpha C_2|,~|1-\alpha C_1|\}< 1,
\end{equation}
i.e.,  $0 < \alpha < 2/{C_2}$.

From the proof of Theorem \ref{thm31}, the operator $I - \alpha\,(T_{\rm e} + T_{\rm c})^{-1}(T_{\rm r} + T_{\rm c})$ is bounded and symmetric in the Hilbert space $\big(
H^{\frac 1 2}(\Gamma), ~\langle\cdot,\cdot\rangle_{\rm ec} \big)$.
According to the fact that the norm of a bounded symmetric operator equals its spectral radius (see for example \cite[Section 31.1]{lax2014}), we then have
\begin{equation}
    \rho(I - \alpha\,(T_{\rm e} + T_{\rm c})^{-1}(T_{\rm r} + T_{\rm c})) = \|I - \alpha\,(T_{\rm e} + T_{\rm c})^{-1}(T_{\rm r} + T_{\rm c})\|_{\rm ec},
\end{equation}
which leads to \eqref{eq:3.38}.
\end{proof}

Theorem \ref{thm3} claims that if $0<\alpha < 2/C_2$ where
$C_2$ is given by \eqref{eq:C12}, then \eqref{eq:iteration} is convergent. 
The convergent rate is determined by $\rho\left(I - \alpha\, {\varepsilon_2^{-1}} \mathcal{S}_{\kappa} \left(T_{\rm r} + T_{\rm c}\right) \right)$, thus by the upper bound and lower bound of the essential spectrum of ${\varepsilon_2^{-1}} \mathcal{S}_{\kappa} \left(T_{\rm r} + T_{\rm c}\right)$ and $\alpha$.
If $C_1$ and $C_2$ defined in \eqref{eq:C12} are sharp evaluations of the  lower bound and upper bound of the essential spectrum of $(T_{\rm e} + T_{\rm c})^{-1}(T_{\rm r}+T_{\rm c})$, then we can choose an optimal value $\alpha_{\rm op}$ for $\alpha$ to speed up the convergence of the Richardson iteration. Specifically, if we set 
\begin{equation}\label{eq:alphaop}
\alpha = \alpha_{\rm op} := \frac{2}{C_1+C_2},
\end{equation}
the spectral radius of iteration  \eqref{eq:iteration} satisfies
\begin{equation}
\rho\left(I - \alpha_{\rm op} \left(T_{\rm e}+T_{\rm c}\right)^{-1} \left(T_{\rm r} +
T_{\rm c}\right) \right)
\leq \frac{C_2-C_1}{C_2+C_1} < 1.
\end{equation}


Let us remind that $C_{\rm ie}^0$ stands for the interior-exterior constant that appears in the definition of $C_1$ in \eqref{eq:C12}, for realistic solvent models, we thus have $\varepsilon_1 =1$, $\varepsilon_2> \varepsilon_1$, and $\kappa < 1$. By \eqref{eq:C12}, we obtain
\begin{equation}
C_2 = 1, \quad
\alpha_{\rm op} = 2 - \frac{2}{\max\{\varepsilon_2,~C_{\rm ie}^0+1\} + 1}.
\end{equation}
The spectral equivalence constants $C_1$ and
$C_2$ do not depend on $\kappa$. 
This means that as $\kappa \rightarrow 0$, the convergence can still be ensured. 
In fact, if we use the constant $C_{\rm ie}^1$, we can obtain another interior-exterior estimate like Lemma \ref{lem:1}:
\begin{equation}
\|u_{\rm r}\|_{H^1(\Omega)}^2  \leq C_{\rm ie}^1 \, \varepsilon_2^{-1} \max\{1,\kappa^{-2}\} 
\left< T_{\rm c}\, g, g \right>,
\end{equation}
which gives 
\begin{equation}\label{ineq:Teur}
    \left< T_{\rm e}\, g, g \right> \leq \varepsilon_2 \max\{1,\kappa^2\} \left\| u_r\right\|_{H^1(\Omega)}^2\leq C_{\rm ie}^1 \, \max\{\kappa^2,\kappa^{-2}\} \left< T_{\rm c}\, g, g \right>
\end{equation}
and then
\begin{equation}
    C_1 =  \frac{1}{1+\max\{\kappa^{2},\kappa^{-2}\} C_{\rm ie}^1}.
\end{equation}
Here the first inequality in \eqref{ineq:Teur} is derived from \eqref{eq:3.20}. 
In this case, $C_1\rightarrow 0$ as $\kappa\rightarrow 0$ (the LPB model becomes the PCM model), which is however not good because the optimal spectral radius $\rho$ tends to one.

\section{Numerical results}\label{sec:num}
In this section, we present numerical experiments to verify the convergence analysis of the generalized ddLPB method and find the optimal step size $\alpha$ in the Richardson iteration.

For simplicity, we consider the VDW cavity as $\Omega$, which can be decomposed into a group of overlapping balls:
$\Omega = \bigcup_{j = 1}^M \Omega_j,$
where $M$ is the number of atoms and $\Omega_j \in \mathbb{R}^3$ represents the $j$th atomic ball. We use the overlapping domain decomposition solvers presented in \cite{quan2019domain} to solve the internal Laplace and HSP equations in Step 2 and 3 of Algorithm \ref{algo}. Specifically, for the Laplace equation of $\psi_{\rm r}$ and the HSP equation of $\psi_{\rm e}$, we solve a group of coupled Laplace sub-equations and coupled HSP sub-equations, each defined in a ball. Since the Laplace or HSP subproblem in a ball has an explicit solution formula, the Laplace and HSP solver in $\Omega$ is efficient and scales linearly. More details on the interior solver can be found in \cite{quan2019domain}.

The solutions of \eqref{eq:dd_laplace} and \eqref{eq:dd_screen} on each sphere are approximated by a linear combination of spherical harmonics ${Y_{\ell}^m}$ with $0\leq \ell \leq \ell_{\max}$ and $-\ell\leq m\leq \ell$. At each external iteration, we need to solve the following linear system:
\begin{equation}
\left\{
\begin{aligned}
A X_{\rm r} &= G_{X} + G_0, \\[1ex]
B X_{\rm e} &= G_{X},
\end{aligned}
\right.
\end{equation}
where $X_{\rm r}$, $X_{\rm e}$, and $G_X$ are the vectors of the coefficients of spherical harmonics for $\psi_{\rm r}$, $\psi_{\rm e}$, and $g$, respectively.
In the generalized ddLPB method, we numerically update the Dirichlet boundary condition \eqref{eq:update} as follows:
\begin{equation}
\label{eq:GX}
G_{X}^{(k)} = (1-\alpha) G_{X}^{(k-1)} + \alpha( F_0 - C_1 X_{\rm r}^{(k)} -
C_2 X_{\rm e}^{(k)}),
\end{equation}
which is slightly different from the original ddLPB method in \cite{quan2019domain}.

By default, we take the dielectric permittivity in the solute cavity to be in vacuum, that is, $\varepsilon_1=1$. Moreover, we set the Debye--H\"uckel screen constant to $\kappa=0.1040$ \AA$^{-1}$ for an ionic strength of $I=0.1$ molar.
The atomic centers, charges, and VDW radii are obtained from the PDB files \cite{berman2000protein} and the PDB2PQR package \cite{dolinsky2004pdb2pqr,dolinsky2007pdb2pqr} with the setting of PEOEPB force field. 
Our convergence criterion is based on the relative error of energy, which is defined as 
\begin{equation}
{\tt Err}_k \coloneqq  \frac{ |E^{(k)} - E^{(k-1)}|}{|E^{(k-1)}| } < {\tt tol} = 10^{-4},
\end{equation} 
where $E^{(k)}$ and $E^{(k-1)}$ are the electrostatic solvation energies (see \cite{quan2019domain} for the computation) at iteration $k$ and $k-1$, respectively, and ${\tt tol}$ is a predefined tolerance. We also set a maximum iteration number of $k_{\rm max} = 60$. An iteration is considered ``convergent'' if the relative error of energy satisfies the convergence criterion within $k_{\rm max}$ steps.





\subsection{Convergence test}\label{sect:conv}
As a first step, we test the convergence of the generalized ddLPB algorithm for two simple cases. One is a single sphere and the other is two spheres with same radius $R=1$. 
In the case of two spheres, the distance between their centers is $0.01R$. 
To run the solver, we set the maximum degree of spherical harmonics to $\ell_{\rm max} = 7$ and the number of Lebedev points to $N_{\rm leb} = 86$ (for more details, please refer to \cite{quan2019domain}).

In Figure \ref{fig:1}, we plot the iteration number against different values of $\alpha\in \{0.1,0.2,\ldots,2.0\}$, corresponding to $\varepsilon_2=2$, $1$, and $0.5$.
Note that we only plot the convergent cases, i.e., the iterations that stopped before reaching the maximum iteration number $k_{\rm max}=60$.

According to Theorem \ref{thm:1}, the ddLPB iteration \eqref{eq:iteration} converges for $0<\alpha<2/{C_2}$, where $C_2=\max\{1,\varepsilon_1/\varepsilon_2\}$.
Hence, when $\varepsilon_2$ equals $1$ or $2$, the ddLPB algorithm will converge for $0<\alpha<2$. Similarly, when $\varepsilon_2$ is $0.5$, the ddLPB algorithm will converge for $0<\alpha<1$.
These results are also observed in Figure \ref{fig:1}.

\begin{figure}[htb!]
\centering
	\subfigure[1 sphere]{
		\includegraphics[width=0.45\textwidth]{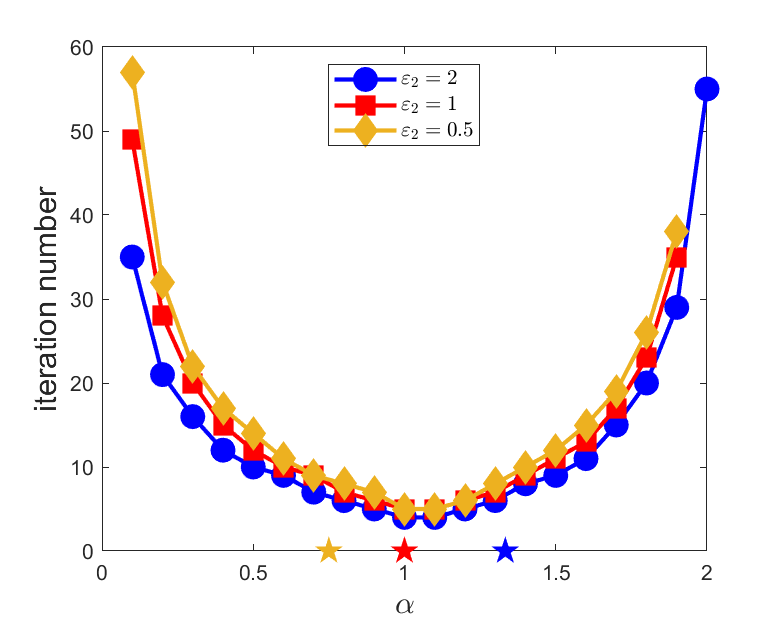}
	}
	\subfigure[2 spheres]{
		\includegraphics[width=0.45\textwidth]{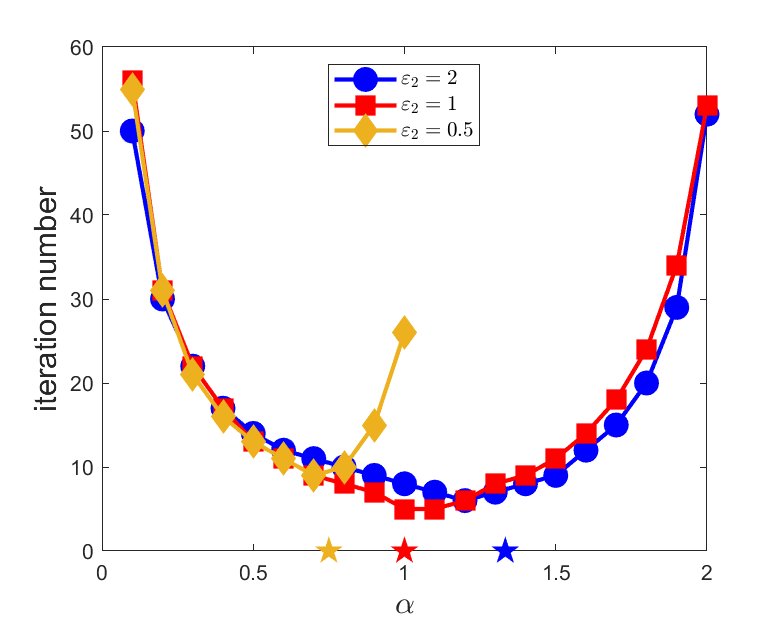}
	}
\caption{Iteration number $N_{\rm ite}(\alpha)$ v.s. the relaxation parameter $\alpha$ for 1 sphere and 2 spheres  ($\varepsilon_1=1,~R=1,~\kappa=1$\AA$^{-1}$).
The ``optimal'' parameter $\bar\alpha_{\rm op}$ computed by \eqref{eq:alphaop2} is the abscissa of star with different colors corresponding different $\varepsilon_2$.}
\label{fig:1}
\end{figure}

In the case of one sphere, when $\varepsilon_2=0.5$, We observe that ddLPB converge not only for $\alpha\in(0,1)$ but also for $\alpha\in(1,2)$. The reason for this may be that $C_2$ is a upper bound in Theorem \ref{thm31} and not sharp enough for this case. However, we also observe from the case of two spheres that the converge range of $\alpha$ becomes $(0,1)$ once there are two spheres close to each other (with distance between centers to be  $0.01R$). Note that in this two cases, the domains have very little difference.

\subsection{Optimal relaxation parameters}
We consider to choose $\alpha\in \{0.1,0.2,\ldots,2.0\}$.
When the iteration number reaches the minimum, the corresponding $\alpha$ is denoted by $\tilde\alpha_{\rm op}$ given by
\begin{equation}\label{eq:alphaop_tilde}
    \tilde \alpha_{\rm op} \coloneqq  \mathop{\rm argmin}\limits_{\alpha \in\{ 0.1,0.2,\ldots,2.0\}} N_{\rm ite}(\alpha),
\end{equation}
where $N_{\rm ite}(\alpha)$ is the iteration number for a fixed $\alpha$ to reach the convergence tolerance.
According to \eqref{eq:alphaop}, we obtain
\begin{equation}
    \alpha_{\rm op} = \frac{2}{\min\{\varepsilon_1/\varepsilon_2,1/(1+C_{\rm ie}^0)\} + \max\{1,\varepsilon_1/\varepsilon_2\}}. 
\end{equation}
As estimating $C_{\rm ie}^0$ is difficult, we ignore the term $1/(1+C_{\rm ie}^0)$ in the above equation. Thus, we have
\begin{equation}\label{eq:alphaop2}
  \alpha_{\rm op} \approx \bar \alpha_{\rm op}\coloneqq \frac{2}{\varepsilon_1/\varepsilon_2 + \max\{1,\varepsilon_1/\varepsilon_2\}}
    = \frac43,~1,~\frac34 \quad \mbox{for}~ \varepsilon_2=2,~1,~0.5,~\mbox{respectively.}
\end{equation} 
We run the ddLPB algorithm for 1 sphere and 2 spheres with the same discretization settings as in Subsection \ref{sect:conv}.
From the right-hand side of Figure \ref{fig:1}, we observe that for 2 spheres, 
$
     \tilde \alpha_{\rm op}
    = 1.3,~1,~ 0.7 \mbox{~for}~ \varepsilon_2=2,~1,~0.5,~\mbox{respectively.}
$
We also notice that $\bar\alpha_{\rm op}$ is close to $\tilde\alpha_{\rm op}$ for different values of $\varepsilon_2$, which verifies our analysis. Therefore, in practical implementations of the ddLPB algorithm, it is advisable to use $\bar\alpha_{\rm op}$ defined in \eqref{eq:alphaop2} as the guess for the optimal relaxation parameter, especially for not large $\varepsilon_2$.

We now investigate the relationship between $\tilde\alpha_{\rm op}$ and $\varepsilon_2\geq 1$. The results are presented in Figure \ref{fig:e2_op}. In fact, when $\varepsilon_2\geq 1$, \eqref{eq:alphaop2} simplifies to:
\begin{equation}\label{eq:alphaop3}
\bar\alpha_{\rm op}= \frac{2}{\varepsilon_2^{-1} + 1},
\end{equation}
which is plotted as a reference curve in Figure \ref{fig:e2_op} 
(the estimation of $C_{\rm ie}^0$ is not considered).

Despite the speciality of 1 sphere case, 
it can be observed that $\tilde\alpha_{\rm op}$ initially increases as $\varepsilon_2$ increases. However, as $\varepsilon_2$ becomes sufficiently large, $\tilde\alpha_{\rm op}$ reaches a plateau and remains relatively constant. The profile of $\tilde\alpha_{\rm op}$ is quite similar to that of $\bar\alpha_{\rm op}$ in \eqref{eq:alphaop3}, except that $\tilde\alpha_{\rm op}$ is smaller. The reason for this may be that $C_1$ is only a lower bound in \eqref{eq:thm1} and is not necessarily the essential infimum. Consequently, the numerical optimal relaxation parameter $\tilde \alpha_{\rm op}$ may be smaller than $\bar\alpha_{\rm op}$ due to the possibility of the essential infimum being greater than $C_1$.

\begin{figure}[h!]
\centering
\includegraphics[width=0.6\textwidth]{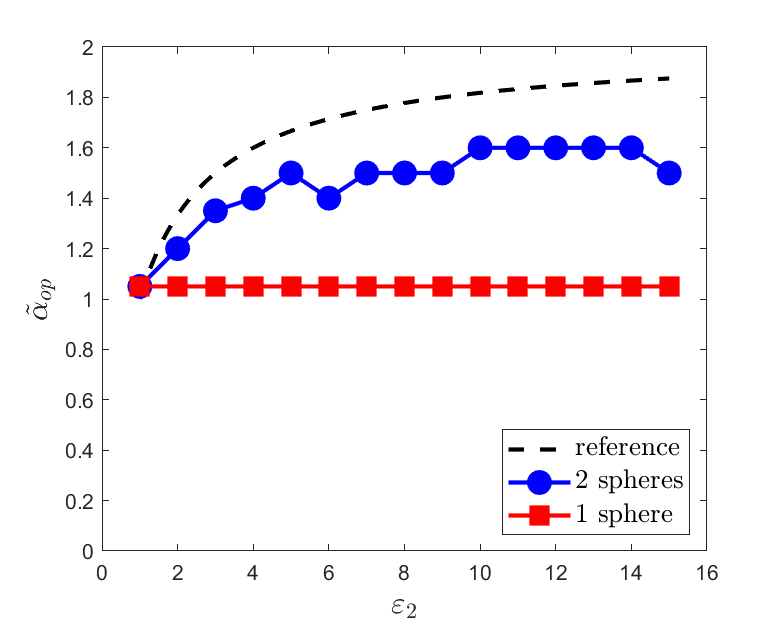}
\caption{ $\tilde\alpha_{\rm op}$ defined in \eqref{eq:alphaop_tilde} v.s. $\varepsilon_2$ for 1 sphere and 2 spheres ($\varepsilon_1=1,~R=1,~\kappa=1$\AA$^{-1}$), and $\bar\alpha_{\rm op}$ in \eqref{eq:alphaop3} is the black dash line. }
\label{fig:e2_op}
\end{figure}

We now investigate how $R$ and $\kappa$ influence the convergence. 
We increase $\alpha$ from $0.1$ by $0.1$. The largest $\alpha$ that endures the convergence is denoted by $\tilde{\alpha}_{\rm max}$. 
We compute $\tilde{\alpha}_{\rm max}$ and $\tilde{\alpha}_{\rm op}$ for different $R$ and $\kappa$ for 2 spheres and the results are in Figure \ref{fig:conv2}. 
We see that as $R$ and $\kappa$ double, $\tilde{\alpha}_{\rm max}$ shows a linear growth trend, while $\tilde{\alpha}_{\rm op}$ gradually increases, getting closer and closer to $\tilde{\alpha}_{\rm max}$.
Since the Richardson iteration operator is $I - \alpha \left(T_{\rm e} + T_{\rm c}\right)^{-1}\left(T_{\rm r} + T_{\rm c}\right)$, we learn that with greater $R$ or greater $\kappa$, the essential upper bound of $\sigma(\left(T_{\rm e} + T_{\rm c}\right)^{-1}\left(T_{\rm r} + T_{\rm c}\right) )$ is lower and its essential lower bound gets larger. 
This shows that the estimate of the spectrum of Richardson iterations might be improved in future work.

However, as we will demonstrate in the next subsections, our theoretical results can also offer some practical benefits.

\begin{figure}[htb!]
\centering
\subfigure{
    \includegraphics[width=0.45\textwidth]{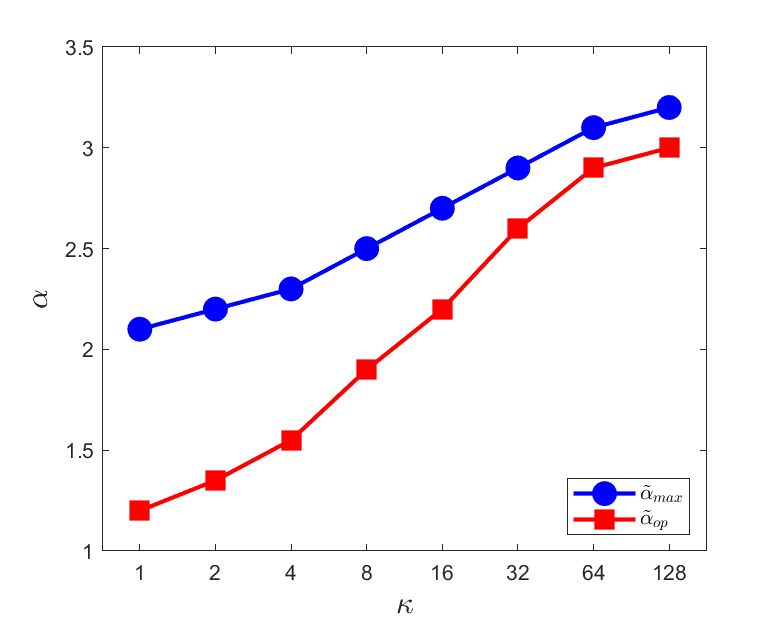}
}
\subfigure{
    \includegraphics[width=0.45\textwidth]{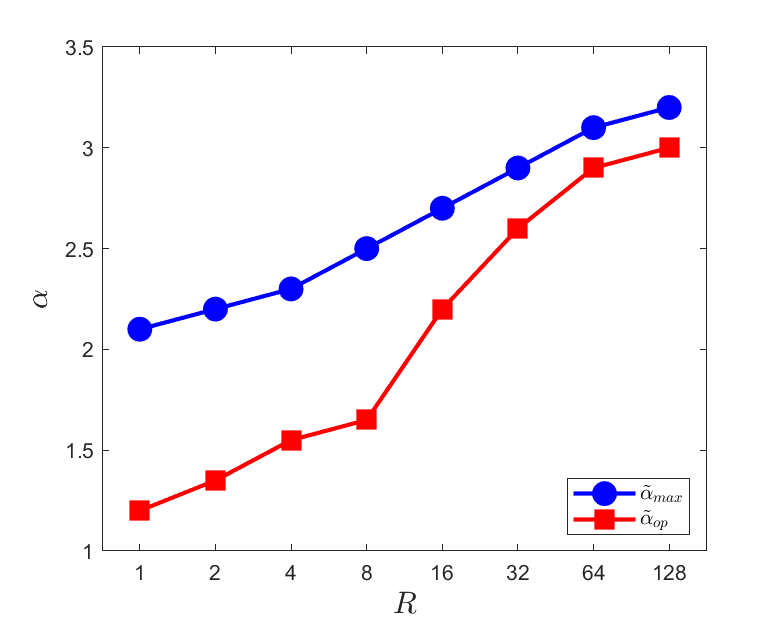}
}
\caption{Left: $\tilde{\alpha}_{\rm op},~\tilde{\alpha}_{\rm max}$ v.s. $R$ for 2 spheres ($\varepsilon_1=1,~\varepsilon_2=2,~\kappa=1$\AA$^{-1}$). Right: $\tilde{\alpha}_{\rm op},~\tilde{\alpha}_{\rm max}$ v.s. $\kappa$ for 2 spheres ($\varepsilon_1=1,~\varepsilon_2=2,~R=1$).}
\label{fig:conv2}
\end{figure}

\subsection{Benzene and caffeine in water solvent}
We now examine the water solvent models for benzene and caffeine with the same discretization settings as in Subsection \ref{sect:conv}. In this scenario, the dielectric permittivity of water is $\varepsilon_2=78.54$ at room temperature $T=298.15$K ($25^{\circ}$C).
Table \ref{table:1} shows the results of $N_{\rm ite}(\alpha)$ for $\alpha=0.1,0.2,\ldots, 2.0$.
We observe that the iteration converges for $\alpha\in(0,2)$ for these molecules, which aligns with our theory.
From this table, $\tilde \alpha_{\rm op}$ is around $1.8$ and $1.5$ respectively for benzene and caffeine. 

Furthermore, the computed energies for benzene and caffeine using different relaxation parameters are nearly identical. Specifically, for benzene, all energy values fall within the range of (-0.2068,-0.2063), while for caffeine, the range is (-2.1989,-2.1952). While the energy values may vary slightly depending on the choice of $\alpha$, the number of iterations required to converge may differ. Therefore, selecting an appropriate $\alpha$ can significantly accelerate the algorithm.

\begin{table}[!ht]
\renewcommand\arraystretch{1.6}
\centering
\scalebox{0.88}{
	\begin{tabular}{|m{44pt}<{\centering}|m{13pt}<{\centering}|m{13pt}<{\centering}|m{13pt}<{\centering}|m{13pt}<{\centering}|m{13pt}<{\centering}|m{13pt}<{\centering}|m{13pt}<{\centering}|m{13pt}<{\centering}|m{13pt}<{\centering}|m{13pt}<{\centering}|m{90pt}<{\centering}|}
	\hline
	\multirow{2}*{Molecules}  & \multicolumn{10}{c|}{Number of iterations with different $\alpha$} & Energy range
	\\ \cline{2-11}
	& $0.1$ & $0.2$ & $0.3$ & $0.4$ & $0.5$ & $0.6$ & $0.7$ & $0.8$ & $0.9$ & $1.0$ & (kJ/mol)
	\\ \hline 
	benzene (12) & $47$ & $32$ & $24$ & $20$ & $17$ & $15$ & $13$ & $12$ & $11$ & $10$ & $(-0.2068,-0.2063)$
	\\ \hline
	caffeine (24) & $30$ & $22$ & $17$ & $14$ & $12$ & $11$ & $10$ & $9$ & $8$ & $7$ & $(-2.1989,-2.1955)$
	\\ \hline
	\multirow{2}*{Molecules}  & \multicolumn{10}{c|}{Number of iterations with different $\alpha$} & Energy range
	\\ \cline{2-11}
	& $1.1$ & $1.2$ & $1.3$ & $1.4$ & $1.5$ & $1.6$ & $1.7$ & $1.8$ & $1.9$ & $2.0$ & (kJ/mol)
	\\ \hline
	benzene (12) & $9$ & $9$ & $8$ & $7$ & $7$ & $7$ & $6$ & \red $5$ & \red $5$ & $7$ & $(-0.2063,-0.2063)$
	\\ \hline
	caffeine (24) & $7$ & $7$ & $6$ & $6$ & \red$3$ & $5$ & $6$ & $7$ & $8$ & $10$ & $(-2.1964,-2.1952)$
	\\ \hline
	\end{tabular}}
\caption{Number of iterations and energy ranges for small molecules in water ($\varepsilon_1=1,~\varepsilon_2=78.54,~\kappa=0.104$ \AA$^{-1}$). The numbers of atoms are indicated in parentheses next to the molecule names.}
\label{table:1}
\end{table}

\subsection{Protein molecules}
We test several protein molecules using the PDB codes 1a3y, 2olx, 1yjo, 1etn, ala25, and 1bbl. To use the ddLPB solver, we set $\ell_{\rm max} = 7$ and $N_{\rm leb} = 86$ for 1a3y, 2olx, 1yjo, 1etn, and ala25. However, due to the high computational cost in Matlab, we set $\ell_{\rm max} = 5$ and $N_{\rm leb} = 50$ for 1bbl, which has 576 atoms.

Using the same settings as Figure \ref{fig:1}, we plot the iteration number versus different values of $\alpha$ for $\varepsilon_2 = 2,~1,~0.5$ in Figure \ref{fig:2}. Specifically, we test $\alpha$ values in the range ${0.1, 0.2, \ldots, 2.0}$. As shown in the figure, when $\varepsilon_2 = 1,~2$, the ddLPB converges for $\alpha\in(0,2)$, while for $\varepsilon_2 = 0.5$, it converges for $\alpha\in(0,1)$, which is consistent with Theorem \ref{thm:1}. Furthermore, Figure \ref{fig:2} demonstrates that $\bar\alpha_{\rm op}$ in \eqref{eq:alphaop2} is similar to $\tilde\alpha_{\rm op}$.

\begin{figure}[htb!]
\centering
	\subfigure[1ay3 ($25$ atoms)]{
		\includegraphics[width=0.45\textwidth]{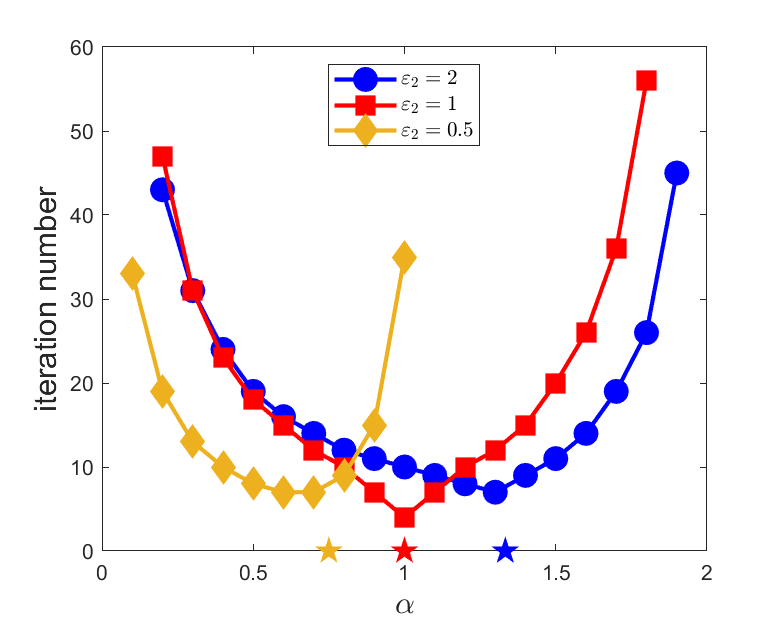}
	}
	\subfigure[2olx ($65$ atoms)]{
		\includegraphics[width=0.45\textwidth]{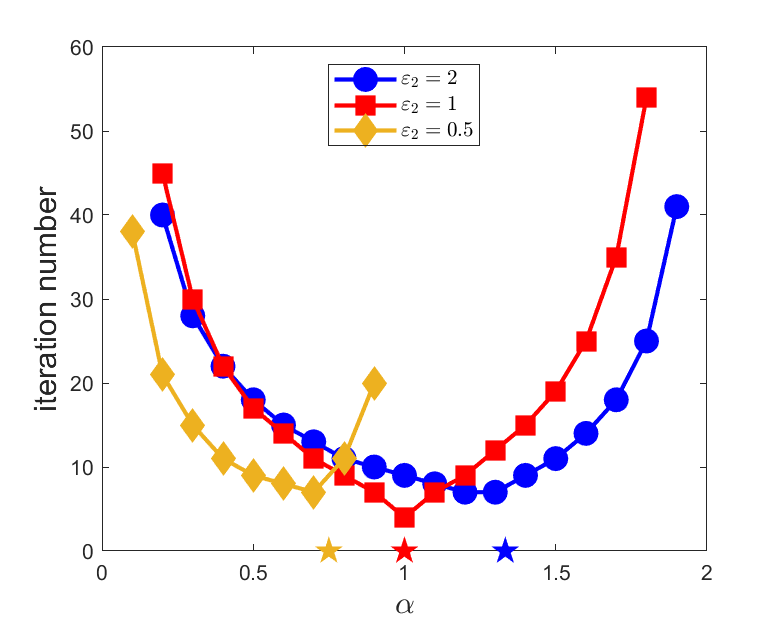}
	}
	\subfigure[1yjo ($121$ atoms)]{
		\includegraphics[width=0.45\textwidth]{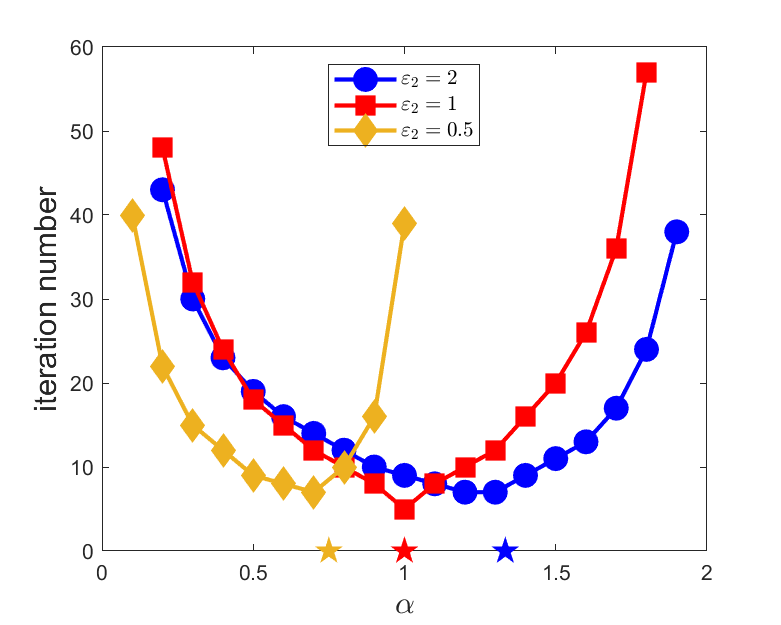}
	}
	\subfigure[1etn ($180$ atoms)]{
		\includegraphics[width=0.45\textwidth]{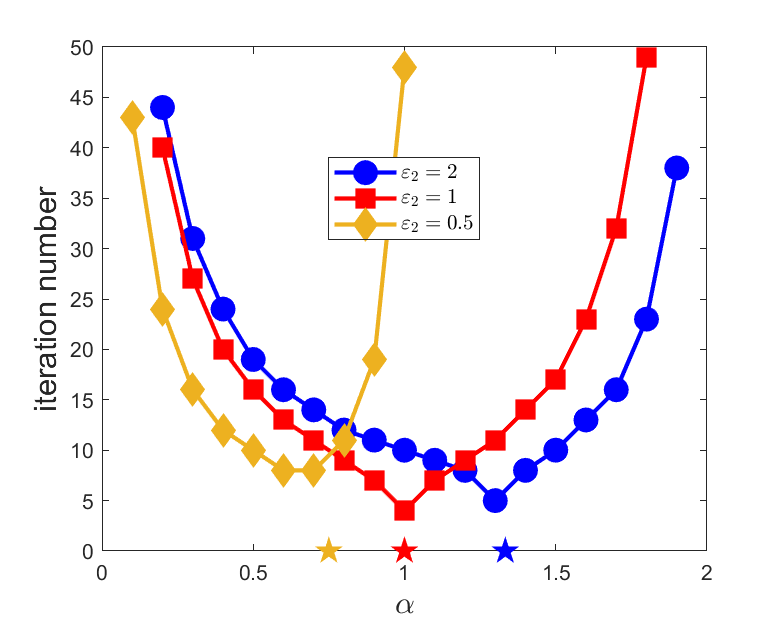}
	}
	\subfigure[ala25 ($259$ atoms)]{
		\includegraphics[width=0.45\textwidth]{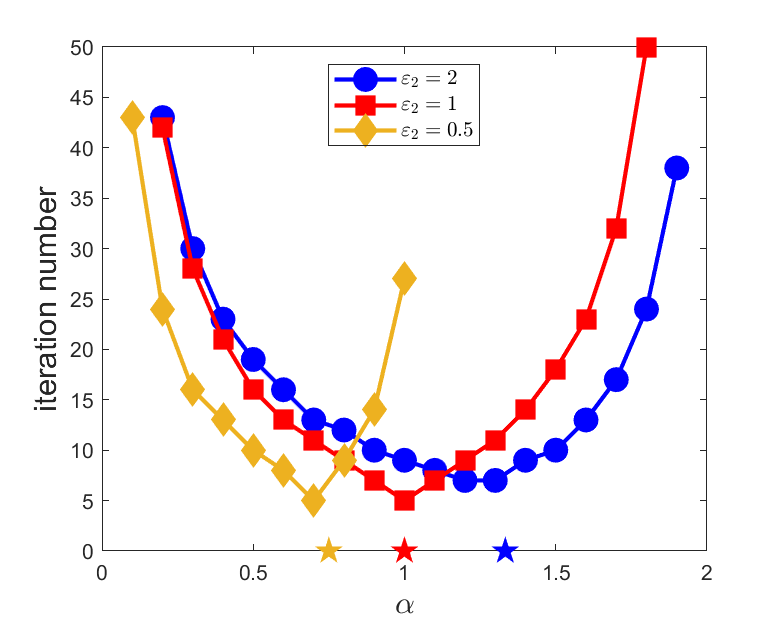}
	}
	\subfigure[1bbl ($576$ atoms)]{
		\includegraphics[width=0.45\textwidth]{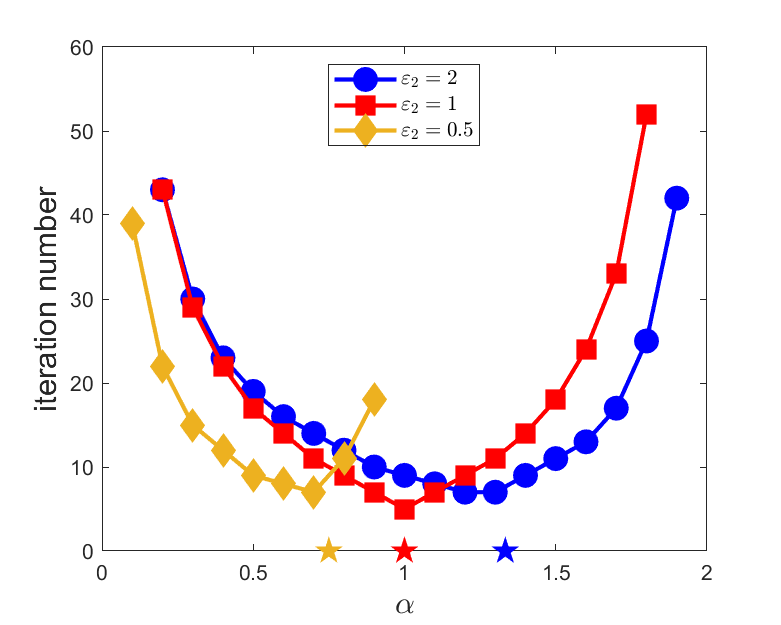}
	}
\caption{Iteration number $N_{\rm ite}(\alpha)$ v.s. the relaxation parameter $\alpha$ for protein molecules ($\varepsilon_1=1,~\kappa=0.104$\AA$^{-1}$).
The optimal parameter $\bar\alpha_{\rm op}$ in \eqref{eq:alphaop2} is the abscissa of star with different colors corresponding different $\varepsilon_2$.}
\label{fig:2}
\end{figure}

In this study, we evaluate the water solvent models with a dielectric constant $\varepsilon_2=78.54$ for the protein molecules under investigation. The results of $N_{\rm ite}(\alpha)$ for $\alpha$ values ranging from 0.1 to 2.0 are presented in Table \ref{table:2}. We observe that for a given protein molecule, the computed energies with different $\alpha$ values are similar, irrespective of the number of iterations. Our findings suggest that $\alpha = 1.7$ consistently provided optimal acceleration.

Compared to the original ddLPB ($\alpha=1$), the use of $\alpha=1.7$ in the generalized ddLPB method reduces the number of iterations, resulting in cost savings. That is to say, the use of optimized $\alpha$ value can improve the efficiency of the generalized ddLPB method in protein simulations.

\setlength{\tabcolsep}{2mm}{
\begin{table}[!ht]
\renewcommand\arraystretch{1.6}
\centering
\scalebox{0.88}{
	\begin{tabular}{|m{56pt}<{\centering}|m{13pt}<{\centering}|m{13pt}<{\centering}|m{13pt}<{\centering}|m{13pt}<{\centering}|m{13pt}<{\centering}|m{13pt}<{\centering}|m{13pt}<{\centering}|m{13pt}<{\centering}|m{13pt}<{\centering}|m{13pt}<{\centering}|m{90pt}<{\centering}|}
	\hline
	 \multirow{2}{*}{PDB code} & \multicolumn{10}{c|}{Number of iterations with different $\alpha$} & Energy range
	\\ \cline{2-11}
	 & $0.1$ & $0.2$ & $0.3$ & $0.4$ & $0.5$ & $0.6$ & $0.7$ & $0.8$ & $0.9$ & $1.0$ & (kJ/mol)
	\\ \hline 
	1ay3 (25) & $26$ & $24$ & $22$ & $20$ & $19$ & $18$ & $17$ & $16$ & $15$ & $14$ & $(-271.11,-270.19)$
	\\ \hline
	2olx (65) & $23$ & $19$ & $16$ & $14$ & $12$ & $11$ & $11$ & $10$ & $9$ & $9$ & $(-559.19,-558.09)$
	\\ \hline
	1yjo (121) & $31$ & $24$ & $20$ & $17$ & $15$ & $13$ & $12$ & $11$ & $11$ & $10$ & $(-728.89,-727.31)$
	\\ \hline
	1etn (180) & $33$ & $26$ & $22$ & $19$ & $17$ & $16$ & $14$ & $13$ & $13$ & $12$ & $(-844.68,-842.40)$
	\\ \hline
	ala25 (259) & $30$ & $23$ & $19$ & $16$ & $14$ & $13$ & $12$ & $11$ & $10$ & $9$ & $(-100.53,-100.32)$
	\\ \hline
	1bbl (576) & $29$ & $23$ & $19$ & $17$ & $15$ & $14$ & $13$ & $12$ & $11$ & $10$ & $(-3322.1,-3314.6)$
	\\ \hline

	\multirow{2}{*}{PDB code} & \multicolumn{10}{c|}{Number of iterations with different $\alpha$} & Energy range 
	\\ \cline{2-11}
	 & $1.1$ & $1.2$ & $1.3$ & $1.4$ & $1.5$ & $1.6$ & $1.7$ & $1.8$ & $1.9$ & $2.0$ & (kJ/mol)
	\\ \hline
	1ay3 (25) & $14$ & $13$ & $12$ & $12$ & $11$ & \red $3$ & $5$ & $7$ & $15$ & $28$ & $(-270.88,-270.04)$
	\\ \hline
	2olx (65) & $9$ & $8$ & $8$ & $8$ & $7$ & \red {$5$} & $7$ & $9$ & $11$ & $23$ & $(-558.17,-557.67)$
	\\ \hline
	1yjo (121) & $9$ & $9$ & $8$ & $8$ & $8$ & \red {$3$} & $5$ & $9$ & $11$ & $19$ & $(-728.25,-727.11)$
	\\ \hline
	1etn (180) & $11$ & $11$ & $10$ & $10$ & $9$ & $9$ & $9$ & \red {$5$} & $7$ & $19$ & $(-842.77,-839.17)$
	\\ \hline
	ala25 (259) & $9$ & $8$ & $8$ & $7$ & $7$ & $5$ & \red {$3$} & $9$ & $12$ & $22$ & $(-100.42,-100.29)$
	\\ \hline
	1bbl (576) & $10$ & $9$ & $9$ & $9$ & $8$ & $7$ & \red {$5$} & $7$ & $13$ & $25$ & $(-3315.3,-3313.2)$
	\\ \hline
	\end{tabular}}
\caption{Number of iterations and energy ranges for protein molecules in water ($\varepsilon_1=1,~\varepsilon_2=78.54,~\kappa=0.104$ \AA$^{-1}$). The numbers of atoms are indicated in parentheses next to the PDB codes.}
\label{table:2}
\end{table}
}

\section{Conclusion}
In this paper, we demonstrate the convergence of an interior-exterior nonoverlapping domain decomposition method for the linear Poisson--Boltzmann equation in $\mathbb{R}^3$. Our analysis is nontrivial due to the unboundedness of the exterior subdomain, which distinguishes it from the classical analysis of the Schwarz alternating method with nonoverlapping bounded subdomains.
It is surprisingly good that the convergence condition $0<\alpha<2$ does not depend on $(\varepsilon_1,\varepsilon_2,\kappa,\Omega)$ for the realistic implicit model where $\varepsilon_1\leq \varepsilon_2$.
A sequence of numerical simulations has been conducted to verify our convergence analysis, and to determine the optimal relaxation parameter for the interior-exterior iteration of the generalized ddLPB method.

\section*{Acknowledgement}
 C. Quan is supported by National Natural Science Foundation of China  (Grant No. 12271241), Guangdong Provincial Key Laboratory of Mathematical Foundations for Artificial Intelligence (2023B1212010001), Guangdong Basic and Applied Basic Research Foundation (Grant No. 2023B1515020030), and Shenzhen Science and Technology Innovation Program (Grant No. RCYX20210609104358076). 
C. Quan would like to acknowledge Prof. Martin Gander for encouraging to study this topic during a conference in Suzhou.
C. Quan and X. Liu thank Prof. Huajie Chen for providing financial support for academic communications and discussions. This project has also received funding from the European Research Council (ERC) under the European Union’s Horizon 2020 research and innovation program (grant agreement No. 810367) for Y. Maday.

\appendix
\section{Estimation of \texorpdfstring{$C_{\rm ie}^0$}{} and \texorpdfstring{$C_{\rm ie}^1$}{} for a ball}\label{append}
\setcounter{equation}{0}
\renewcommand{\theequation}{A.\arabic{equation}}
We estimate the newly-defined interior-exterior constants $C_{\rm ie}^0$ and $C_{\rm ie}^1$ for a ball $\Omega = B_{R}(\mathbf 0)$ with radius $R>0$.
Given some $g\neq 0\in H^{\frac 1 2}\big(\partial B_{R}(\mathbf 0)\big) $, we have
	\begin{align}
	u_{\rm r}(r \mathbf s) &= \sum_{\ell=0}^{\infty} \sum_{m=-\ell}^{\ell} c_{\ell,m}\left(
\frac{r}{R}\right)^\ell Y_{\ell}^m(\mathbf s) \quad {\rm for}~ 0\leq r\leq R,~\mathbf s\in
\mathbb{S}^2, 
	\\[1ex]
	u_{\rm c1} (r \mathbf s) & = \sum_{\ell=0}^{\infty} \sum_{m=-\ell}^{\ell} c_{\ell,m}
\frac{k_{\ell}(r)}{k_{\ell}(R)} Y_{\ell}^m(\mathbf s) \quad {\rm for}~ r\geq R,~\mathbf s\in
\mathbb{S}^2, 
	\end{align}
	where $\mathbb{S}^2$ is the boundary of the unit ball, $c_{\ell,m}$ is the
coefficient of spherical harmonics satisfying
	\begin{equation}
	g(R\mathbf s) = \sum_{\ell=0}^{\infty} \sum_{m=-\ell}^{\ell} c_{\ell,m}
Y_{\ell}^{m}(\mathbf s) \quad {\rm for}~ \mathbf s\in \mathbb{S}^2,
	\end{equation}
	and $k_\ell$ is the modified spherical Bessel functions of the second kind \cite[Equation (14.194)]{arfken2012mathematical} with
	\begin{equation}
	k_{\ell}(r) := \sqrt{ \frac{2}{\pi r} } K_{\ell+\frac{1}{2}}(r).
	\end{equation}
	Here, $K_{\alpha}(x)$ with the subscript $\alpha$ is the modified Bessel functions of the second kind. Due to the orthogonality of the spherical harmonics, we calculate
 \begin{equation}
	\begin{aligned}\label{eq:app1}
	&\lVert u_{\rm r} \rVert_{L^2(\Omega)}^2  = \int_0^R  \int_{\mathbb{S}^2} \left( \sum_{\ell=0}^{\infty}
\sum_{m=-\ell}^{\ell} c_{\ell,m}  \left( \frac{r}{R}\right)^{\ell}Y_{\ell}^m(\mathbf s)\right)^2 {\rm d} r \,(r^2{\rm d}\mathbf s)\\
& = \int_0^R r^2 \sum_{\ell=0}^{\infty}
\sum_{m=-\ell}^{\ell} c_{\ell,m} ^2 \left( \frac{r}{R}\right)^{2\ell}  {\rm d} r
= \sum_{\ell=0}^{\infty} \sum_{m=-\ell}^{\ell} c_{\ell,m} ^2
\frac{R^3}{2\ell+3}.
\end{aligned}
\end{equation}
	Multiplying \eqref{eq:uc_1} with $u_{{\rm c1}}$ and integrating over
$\Omega^{\mathsf c}$, we obtain
\begin{equation}
	\begin{aligned}\label{eq:app2}
	&\lVert u_{\rm c1} \rVert_{H^1(\Omega^{\mathsf c})} ^2 = \left< u_{\rm c1},-
\partial_{\mathbf n} u_{\rm c1} \right> 
	\\
	& = \int_{\mathbb{S}^2}  \left( \sum_{\ell=0}^{\infty} \sum_{m=-\ell}^{\ell}
c_{\ell,m} Y_{\ell}^m(\mathbf s)  \right)  \left( - \sum_{\ell'=0}^{\infty}
\sum_{m'=-\ell'}^{\ell'} c_{\ell',m'} \frac{k'_{\ell'}(R)}{k_{\ell'}(R)}
Y_{\ell'}^{m'}(\mathbf s)  \right) (R^2 {\rm d} \mathbf s)
	\\
	& = \sum_{\ell=0}^{\infty} \sum_{m=-\ell}^{\ell} c_{\ell,m}^2   \frac{-
R^2 k'_{\ell}(R)}{k_{\ell}(R)} .
	\end{aligned}
 \end{equation}
The modified spherical Bessel functions have the following properties
\cite[Equation (14.195)]{arfken2012mathematical} :
	\begin{equation}
	k_{\ell-1}(x) - k_{\ell+1}(x) = -\frac{2\ell+1}{x} k_{\ell}(x),\label{eqA8}
	\end{equation}
	\begin{equation}
	\ell k_{\ell-1}(x)+(\ell+1)k_{\ell+1}(x) = -(2\ell+1)k'_{\ell}(x), \quad{\rm
for}~\ell\in\mathbb{N},~x>0.\label{eqA9}
	\end{equation}
    Since $k_0(x) = \frac{e^{-x}}{x},~k_1(x) = e^{-x}\left(\frac{1}{x} + \frac{1}{x^2}\right)$, we know from \eqref{eqA8} and by deduction that $k_{\ell}(x) > 0$. Combining \eqref{eqA8} and \eqref{eqA9}, we then have
    \begin{equation}
	\begin{aligned}
    \frac{- k'_{0}(x)}{k_{0}(x)} & = 1+\frac{1}{x}\geq \frac 1 x, \\
	 \frac{- k'_{\ell}(x)}{k_{\ell}(x)} & = \frac{k_{\ell-1}(x)}{k_{\ell}(x)} + \frac{\ell+1}{x} \geq \frac{\ell+1}{x} \qquad{\rm for}~\ell\in\mathbb{N}_+,~x>0.
	\end{aligned}
     \end{equation}
	Then we can estimate for any $\ell\in\mathbb N$,
	\begin{equation}
 \begin{aligned}
	\frac{{R^3}/{(2\ell+3)}}{{-R^2 k'_{\ell}(R)}/{k_{\ell}(R)} } &\leq \frac{ R^3/ (2\ell+3)}{R (\ell+1)} =  \frac{R^2}{(2\ell+3)(\ell+1)} \leq \frac{R^2}{3}, \\
 \frac{{R^3}/{(2\ell+3)}+R\ell}{{-R^2 k'_{\ell}(R)}/{k_{\ell}(R)} } & \leq \frac{ R^3/ (2\ell+3)+R\ell}{R (\ell+1)} \leq \frac{R^2}{(2\ell+3)(\ell+1)} +1 \leq \frac{R^2}{3}+1.
 \end{aligned}
	\end{equation}
	This together with \eqref{eq:app1} -- \eqref{eq:app2} leads to 
	\begin{equation}
	\lVert u_{\rm r} \rVert_{L^2(\Omega)}^2 \leq \frac{R^2}{3} \lVert u_{\rm c}
\rVert_{H^1(\Omega^{\mathsf c})}^2,\quad \lVert u_{\rm r} \rVert_{H^1(\Omega)}^2 \leq \left(\frac{R^2}{3}+1\right) \lVert u_{\rm c}
\rVert_{H^1(\Omega^{\mathsf c})}^2.
	\end{equation}
implying that
	 \begin{equation}
	 C_{\rm ie}^0 = \sup_{g\neq 0\in H^{\frac 1 2}\big(\partial B_{R}(\mathbf 0)\big)  } \frac{\lVert u_{\rm r} \rVert_{L^2(\Omega)}^2}{ \lVert u_{\rm c}
\rVert_{H^1(\Omega^{\mathsf c})}^2}  \leq \frac{R^2}{3},
	 \end{equation}
  and
	 \begin{equation}
	 C_{\rm ie}^1 = \sup_{g\neq 0\in H^{\frac 1 2}\big(\partial B_{R}(\mathbf 0)\big)  } \frac{\lVert u_{\rm r} \rVert_{H^1(\Omega)}^2}{ \lVert u_{\rm c}
\rVert_{H^1(\Omega^{\mathsf c})}^2}  \leq \frac{R^2}{3}+1.
	 \end{equation}
	 
\section{Spectrum of \texorpdfstring{$(T_{\rm e} + T_{\rm c})^{-1}(T_{\rm r} + T_{\rm c})$}{} for a sphere}\label{append:B}
\setcounter{equation}{0}
\renewcommand{\theequation}{B.\arabic{equation}}
Consider a sphere $\partial B_R(\mathbf 0)$ with radii $R$ and center $\mathbf 0$.
Assume that 
\begin{equation}
g\left(R\mathbf{s}\right)=\sum_{\ell =0}^{\infty} \sum_{m =-\ell }^{\ell }\left[g\right]_{\ell  m }Y_{\ell }^{m }(\mathbf{s}), \quad \mathbf{s} \in \mathbb S^2.
\end{equation}
Then, according to the definitions \eqref{eq:Tr}--\eqref{eq:Te} of DtN operators $T_{\rm r},~ T_{\rm c}$ and $T_{\rm e}$, we have
\begin{equation}
\begin{aligned}
&\partial_{\mathbf{n}} u_{\mathrm{r}}\left(R\mathbf{s}\right)=\sum_{\ell =0}^{\infty} \sum_{m =-\ell }^{\ell }\left[g\right]_{\ell  m }\left(\frac{\ell }{R}\right) Y_{\ell }^{m }(\mathbf{s}), \quad \mathbf{s} \in \mathbb S^2,\\
&\partial_{\mathbf{n}} u_{\mathrm{e}}\left(R\mathbf{s}\right)=\sum_{\ell =0}^{\infty} \sum_{m =-\ell }^{\ell }\left[g\right]_{\ell  m } \frac{i'_{\ell } \left( R\right)}{i_{\ell }\left( R\right)} Y_{\ell }^{m }(\mathbf{s}), \quad \mathbf{s} \in \mathbb S^2,\\
&\partial_{\mathbf{n}} u_{\mathrm{c}}\left(R\mathbf{s}\right)=\sum_{\ell =0}^{\infty} \sum_{m =-\ell }^{\ell }\left[g\right]_{\ell  m } \frac{k'_{\ell } \left( R\right)}{k_{\ell }\left( R\right)} Y_{\ell }^{m }(\mathbf{s}), \quad \mathbf{s} \in \mathbb S^2,
\end{aligned}
\end{equation}
where $i_\ell$ and $k_\ell$ are the spherical Bessel functions of the first and second kinds respectively (see \cite[Equation (14.194)]{arfken2012mathematical}).
Let $\mathcal A = (T_{\text e}+T_{\text c})^{-1}(T_{\text r}+T_{\text c})$.
Since $T_{\rm r} g = \varepsilon_1 \partial_{\mathbf n} u_{\rm r}$, $T_{\rm c} g = -\varepsilon_2 \partial_{\mathbf n} u_{\rm c}$, and $T_{\rm e} g = \varepsilon_2 \partial_{\mathbf n} u_{\rm e}$, we have
\begin{equation}
\mathcal A Y_\ell^m = \left(\frac{i'_{\ell } \left( R\right)}{i_{\ell }\left( R\right)} - \frac{k'_{\ell } \left( R\right)}{k_{\ell }\left( R\right)}\right)^{-1}\left(\frac{\varepsilon_1}{\varepsilon_2}\frac\ell R - \frac{k'_{\ell } \left( R\right)}{k_{\ell }\left( R\right)}\right) Y_\ell^m =: \lambda_{\mathcal A}(\ell) Y_\ell^m.
\end{equation}
As a consequence, the set $\{(\lambda_{\mathcal A}(\ell),Y_\ell^m):\ell\geq 0, -\ell \leq m\leq \ell\}$ collects the eigenpairs of the selfadjoint and positive definite operator $\mathcal A$, and note that $Y_\ell^m$ forms a complete orthogonal basis of $L^2(\mathbb S^2)$.

Consider the case of $\varepsilon_1 = 1,~\varepsilon_2 = 2,~R = 1$. Figure \ref{fig:lambda_A} illustrates the relationship between $\lambda_{\mathcal A}$ and $\ell$, which shows the convergence $\lambda_{\mathcal A}(\ell)\rightarrow \lambda_{\mathcal A}(\infty)$ as $\ell\rightarrow \infty$. Note that $\lambda_{\mathcal A}(\infty)$ is not an eigenvalue of $\mathcal A$ and thus is an element in the continuous spectrum. This indicates that in this simple case, the spectrum of $\mathcal A$ is not only composed of eigenvalues.

\begin{figure}[htb!]
\centering
    \includegraphics[width=0.65\textwidth]{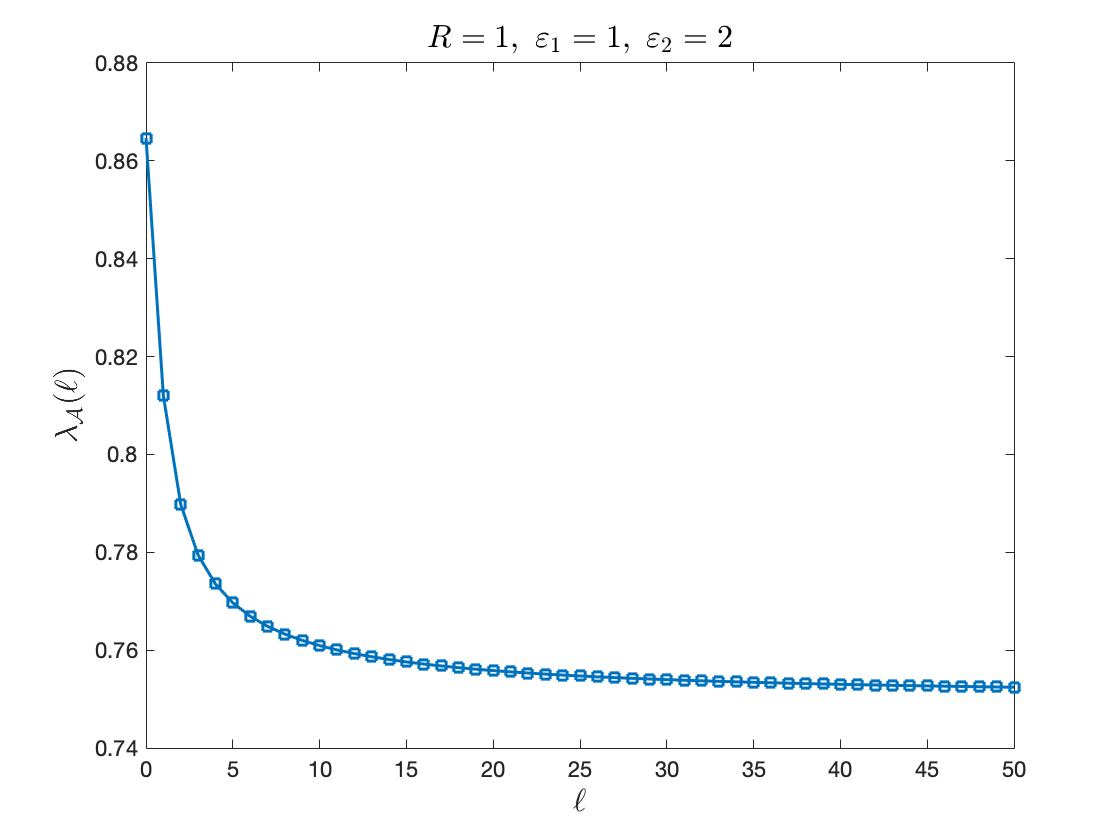}
\caption{$\lambda_{\mathcal A}(\ell)$ v.s. $\ell$ for the unit sphere where $\varepsilon_1=1,~\varepsilon_2=2$.}
\label{fig:lambda_A}
\end{figure}


\bibliography{bibfile}
\bibliographystyle{siam}

\end{document}